\title{Tropical Moduli Space of Rational Graphically Stable Curves}
\author{Andy Fry}
\begin{document}




\begin{abstract}
We study moduli spaces of rational graphically stable tropical curves and a refinement given by radial alignment. Given a complete multipartite graph $\Gamma$, the moduli space of radially aligned $\Gamma$-stable tropical curves can be given the structure of a balanced fan. This fan structure coincides with the Bergman fan of the cycle matroid of $\Gamma$.
\end{abstract}

\maketitle

\section{Introduction}

The moduli space $\Mzerontrop$ is a cone complex which parameterizes leaf-labelled metric trees. Its structure is obtained by gluing positive orthants of $\RR^{n-3}$ corresponding to trivalent trees. Speyer and Sturmfels \cite{speyer2004tropical} give an embedding of this cone complex (in the context of phylogenetic trees) into a real vector space as a balanced fan where each top-dimensional cone is assigned weight $1$. In \cite{ardila2006bergman}, Ardila and Klivans study phylogenetic trees and show that the fan structure of $\Mzerontrop$ has a refinement which coincides with the Bergman fan of the cycle matroid of $K_{n-1}$, the complete graph on $n-1$ vertices. As a generalization of Ardila and Klivans, it is shown by Cavalieri, Hampe, Markwig, and Ranganathan in \cite{cavalieri2016moduli} that the fan associated to the moduli space of rational heavy/light weighted stable tropical curves, $\M_{0,w}^{\textrm{trop}}$, and the Bergman fan of a graphic matroid have the same support. 

The main result of this paper further generalizes their result by starting with stability conditions defined by a graph (reduced weight graph) rather than a weight vector.
We introduce \emph{tropical rational graphically stable curves} 
(Definition~\ref{def:GammaStability}) and write $\MzeroGtrop$ as the moduli space of these curves.
We define this moduli space so that if we begin with a graph that is also a reduced weight graph (Definition 2.13 of \cite{cavalieri2016moduli}) we recover the corresponding weighted moduli space.

Keeping in mind that the Bergman fan of a graphic matroid is a refinement of $\Mzerontrop$ (also $\M_{0,w}^{\textrm{trop}}$), we add the extra condition of \emph{radial alignment} to $\Mzerontrop$ to define the moduli space $\Mzeronrad$. 
Radial alignment refers to an ordered partition on the vertices of the combinatorial type of a curve. 
One result is an independent restatement of results from \cite{ardila2006bergman} and \cite{franccois2013diagonal}. 
It shows that the radial alignment is the condition needed to achieve the Bergman fan refinement.\\

\noindent\textbf{Lemma~\ref{lem:MzeronradIsEqualToBKprimeAsConeComplexes}} The moduli space $\Mzeronrad$ has a fan structure equal to the Bergman fan of the cycle matroid of $K_{n-1}$. \\

Our main result characterizes tropical matroidal moduli spaces.\\

\noindent\textbf{Theorem~\ref{thm:BergmanFanEqualsMzeroGammarad}} The moduli space $\MzeroGrad$ has the structure of a balanced fan if and only if $\Gamma$ is a complete multipartite graph. Furthermore, this fan is equal to the Bergman fan of the cycle matroid of $\Gamma$.\\

Our motivation for this paper comes from the theory of tropical compactifications and log-geometry.
From work of Tevelev \cite{tevelev2007compactifications} and Gibney and Maclagan \cite{gibney2011equations} it has been shown that there is an embedding of $\M_{0,n}$ into a toric variety $X(\Sigma)$ where the tropicalization of $\M_{0,n}$ is a balanced fan $\Sigma\cong\Mzerontrop$. This embedding is special in the sense that the closure of $\M_{0,n}$ in $X(\Sigma)$ is $\overline{\M}_{0,n}$. Cavalieri et al. \cite{cavalieri2016moduli} show a similar embedding can be constructed for weighted moduli spaces when the weights are heavy/light spaces. We begin this process for matroidal spaces. 
In \cite{ranganathan2017moduli} Ranganathan, Santos-Parker, and Wise describe radial alignments of genus 1 tropical curves and show how this extra data can be used for desingularization. \\

The paper is organized as follows. In section 2 we define a matroid axiomatically using independence axioms. Afterwards we describe how a matroid can also be defined via base, rank, closure, and circuit axioms. Then we restrict our attention to the cycle matroid and discuss necessary graph theory terminology.

In Section 3 we begin by defining the moduli space of rational $n$-marked tropical curves, $\Mzerontrop$. We also describe an embedding as a balanced fan into a real vector space as in \cite{gathmann2009tropical}. 
In Section 3.1, we define the Bergman fan of a matroid and also define radially aligned rational $n$-marked tropical curves by imposing a weak ordering on the vertices given by their distances from the root vertex. Using radially aligned tropical curves we describe an explicit constructive bijection between cones of the $\BKprime$ and cones of $\Mzeronrad$.

Section 3.2 is original work motivated by \cite{cavalieri2016moduli}. We define graphically stable radially aligned tropical curves. In subsection 3.2.1 we investigate projections of $\Mzeronrad$ and $\BKprime$ by forgetting coordinates of rays corresponding unstable curves and show that the fans coincide with $\BGammaprime$. Subsection 3.2.2 contains our final result that states $\MzeroGrad$ can be embedded as a balanced fan equal to the $\BGammaprime$ when $\Gamma$ is a complete $k$-partite graph. \\

      
\section{Matroids}

\subsection{Introduction to Matroids}

The concept of a matroid was independently discovered in 1930's by Whitney \cite{whitney1935abstract}, van der Waerden \cite{van1950moderne}, and Nakasawa \cite{nishimura2009lost}. Whitney's original paper looked at the similarities between linear independence and graph theoretic independence. Similarly, van der Waerden was also interested in generalizing the notion of independence by comparing linear independence and algebraic independence. 

Over the next 30 years the following key results arose. In the 30's Birkhoff made the connection that one of the rank axioms ((R3$'$) specifically) is the semimodular condition for lattices \cite{birkhoff1935abstract} and Mac Lane wrote an article on the relations to projective geometry \cite{maclane1936some}. The 1940's saw expansions by Rado with work on transversality \cite{rado1942theorem} and infinite matroids \cite{rado1949axiomatic} and Dilworth who wrote more on lattice theory \cite{dilworth1944dependence}. It wasn't until the late 50's/early 60's when matroid theory took off. Much of this due to the results of Rado, Tutte, Edmonds, and Lehman. Highlighting some results of Tutte are the categorization of binary and regular \cite{tutte1958homotopy}, and graphic \cite{tutte1959matroids} matroids.

Since then matroids have been a target study for linear algebra, graph theory, optimization, block designs, combinatorial algebraic geometry and more. We begin as Whitney did, the axiomatic definition in terms of independence.\\

A \emph{matroid} is a tuple $M=(E,I)$ where $E$ is a finite set (called the \emph{ground set}) and $I$ is a collection of subsets of $E$ such that \ref{ax:I1}--\ref{ax:I3} are satisfied.

\begin{enumerate}[label=(\textbf{I\arabic*})]
\item \label{ax:I1} $\emptyset\in I$
\item \label{ax:I2} If $X\in I$ and $Y\subseteq X$, then $Y\in I$
\item \label{ax:I3} If $U,V\in I$ with $|U|=|V|+1$, then there exists $x\in U\setminus V$ such that $V\cup x\in I$.
\end{enumerate}
The elements of $I$ are called \emph{independent sets} and thusly call \ref{ax:I1}, \ref{ax:I2}, and \ref{ax:I3} the \emph{independence axioms}. If a subset of $E$ is not independent, then we call it \emph{dependent}. More commonly \ref{ax:I3} is known as the \emph{exchange property}. Other resources tend to restrict the definition of a matroid to just the latter two properties.\\

For a matroid $M=(E,I)$ we make the following definitions. A \emph{base} $B$ of $M$ is a maximal independent subset of $E$. A \emph{circuit} $C$ of $M$ is a minimal dependent set. Minimal and maximal refer to the size of the circuit or base. Denote $2^E$ as the power set of $E$. The \emph{rank function} of a matroid is $\rk:2^E\rightarrow\ZZ$ defined by 
$$\rk(A)=\max(|X|:X\subseteq A,X\in I).$$
In the case where $A\in I$, then $\rk(A)=|A|$. 

We are also interested in the notion of a flat or subspace. A subset $F\subseteq E$ is a \emph{flat} (also called a \emph{subspace} or \emph{closed}) of $M(\Gamma)$ if for all $x\in E\setminus F$, 
    $$\rk(F\cup x)=\rk(F)+1.$$
In other words,
$F$ is a flat if there are no elements that can be added to $F$ without increasing the rank of $F$. 
Define the \emph{closure operator} to be a function $\cl:2^E\rightarrow2^E$ such that $\cl(A)$ is the set of elements that satisfy the following property. If $x\in E$ and $A\subset E$, then $\cl(A\cup x)=\cl(A)$. It turns out that $\cl(A)$ is the smallest flat containing $A$. Ordered by rank we may associate a partially ordered set (poset) to the flats of a matroid. It turns out that this poset actually forms a lattice, called the \emph{lattice of flats}.\\

Next we give some equivalent axiomatic definitions of a matroid as presented by Welsh in \cite{welsh1976matroid}, the first comprehensive book on matroid theory.\\

\noindent{\bf Base Axiom:} A non-empty collection $\mathscr{B}$ of subsets of $E$ is the set of bases of a matroid on $E$ iff it satisfies the following condition:

\begin{enumerate}[label=(\textbf{B\arabic*})]
    \item \label{ax:B1} For $B_1, B_2\in\mathscr{B}$ and for $x\in B_1\setminus B_2$, there exists $y\in B_2\setminus B_1$ such that $(B_1\cup y)\setminus x \in\mathscr{B}.$\\
\end{enumerate}

There are two ways to define a matroid in terms of rank.\\

\noindent{\bf Rank Axioms 1:} A function $\rk:2^E\rightarrow\ZZ$ is the rank function of a matroid $E$ if and only if for $X\subseteq E,$ and $y,z\in E$:

\begin{enumerate}[label=(\textbf{R\arabic*})]
    \item \label{ax:R1} $\rk(\emptyset)=0$;
    \item \label{ax:R2} $\rk(X)\leq\rk(X\cup y)\leq\rk(X)+1$;
    \item \label{ax:R3} if $\rk(X\cup y)=\rk(X\cup z)=\rk(X)$ then $\rk(X\cup y \cup z)=\rk(X).$\\
\end{enumerate}

\noindent{\bf Rank Axioms 2:} A function $\rk:2^E\rightarrow\ZZ$ is the rank function of a matroid $E$ if and only if for any subsets $X$, $Y$ of $E$:

\begin{enumerate}[label=(\textbf{R\arabic*}$'$)]
    \item \label{ax:R1'} $0\leq\rk(X)\leq|X|$;
    \item \label{ax:R2'} $X\subseteq Y\Rightarrow \rk(X)\leq\rk(Y)$;
    \item \label{ax:R3'} $\rk(X\cup Y)+\rk(X\cap Y)\leq \rk(X)+\rk(Y).$ \\
\end{enumerate}

\noindent{\bf Closure Axioms:} A function $\cl:2^E\rightarrow2^E$ is the closure operator of a matroid on $E$ if and only if for $X,Y\subset E$ and $x,y\in E$:

\begin{enumerate}[label=(\textbf{S\arabic*})]
    \item \label{ax:S1} $X\subseteq\cl(X)$
    \item \label{ax:S2} $Y\subseteq X \Rightarrow \cl(Y)\subseteq\cl(X)$ 
    \item \label{ax:S3} $\cl(X)=\cl(\cl(X))$
    \item \label{ax:S4} if $y\not\in\cl(X)$ but $y\in\cl(X\cup x)$, then $x\in\cl(X\cup y)$.\\
\end{enumerate}

\noindent{\bf Circuit Axioms:} A collection $\mathscr{C}$ of subsets of $E$ is the set of circuits of a matroid on $E$ if and only if the following conditions are satisfied:

\begin{enumerate}[label=(\textbf{C\arabic*})]
    \item \label{ax:C1} If $X\neq Y\in\mathscr{C},$ then $X\not\subseteq Y.$
    \item \label{ax:C2} If $C_1, C_2$ are distinct members of $\mathscr{C}$ and $z\in C_1\cap C_2$ then there exists $C_3\in\mathscr{C}$ such that $C_3\subseteq(C_1\cup C_2)\setminus z.$
\end{enumerate}

\subsection{The Cycle Matroid}




In this paper, we consider the matroid of a finite simple connected graph $\Gamma=(V,E)$ where $V$ is the ordered vertex set and $E$ the edge set, sometimes denoted $E(\Gamma)$. We denote $e_{ij}\in E$ to be an edge between vertices $v_i$ and $v_j$. A graph is \emph{complete} if each pair of distinct vertices has an edge between them. The complete graph on $n$ vertices is denoted $K_n$. A \emph{clique} is a subgraph that is complete, denoted $K_I$ where $I$ is the set of vertices with edges between them. A disjoint union of complete graphs is called a \emph{cluster graph}. 

Often called the \emph{cycle matroid}, this matroid is given by $M(\Gamma)=(E(\Gamma),I)$ where $I$ is the collection of all forests of $\Gamma$. A \emph{forest} of $\Gamma$ is a subgraph, possibly disconnected, such that all connected components are trees.



\begin{lem}
Let $\Gamma$ be as above. Then $M(\Gamma)$ is a matroid.
\end{lem}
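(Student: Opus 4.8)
The plan is to verify directly that the collection $I$ of forests of $\Gamma$ satisfies the independence axioms \ref{ax:I1}--\ref{ax:I3}; the first two are immediate and the substance lies entirely in the exchange property. For \ref{ax:I1}, the edgeless subgraph contains no cycle, so $\emptyset\in I$. For \ref{ax:I2}, if $X\in I$ and $Y\subseteq X$, then $Y$ arises from the acyclic subgraph $X$ by deleting edges, which cannot create a cycle, so $Y$ is a forest and $Y\in I$. I would dispatch each of these in a sentence.

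The real work is \ref{ax:I3}, and the tool I would establish first is the standard count relating edges to components: regarding a forest $F\subseteq E(\Gamma)$ as a spanning subgraph on the vertex set $V$ (with isolated vertices counting as their own components), the number of connected components of $F$ equals $|V|-|F|$, since a tree on $k$ vertices has $k-1$ edges. Now take forests $U,W\in I$ in the roles of the two sets in \ref{ax:I3}, with $|U|=|W|+1$; the count shows $U$ has exactly one fewer component than $W$. I would then argue by contradiction: suppose no edge of $U\setminus W$ joins two distinct components of $W$, so every such edge has both endpoints in a single component. Since the edges of $U\cap W$ already lie within components of $W$, it follows that every edge of $U$ lies within some component of $W$, whence each connected component of $U$ is contained in a component of $W$. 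This forces $U$ to have at least as many components as $W$, contradicting that it has one fewer. Hence some edge $x\in U\setminus W$ connects two different components of $W$; adding $x$ creates no cycle, so $W\cup x$ is a forest and $W\cup x\in I$, which is exactly \ref{ax:I3}.

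The main obstacle is this exchange step. Everything hinges on converting the acyclicity condition into the component count $|V|-|F|$, after which a short refinement argument --- each component of $U$ sitting inside a component of $W$ --- yields the needed contradiction. Axioms \ref{ax:I1} and \ref{ax:I2} are essentially free.
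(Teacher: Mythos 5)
Your proof is correct and takes essentially the same approach as the paper: axioms \ref{ax:I1} and \ref{ax:I2} are dispatched immediately, and \ref{ax:I3} is established via the count relating the number of edges of a forest to its number of connected components, with the exchange edge found as one joining two distinct components of the smaller forest. The only cosmetic difference is that you regard forests as spanning subgraphs on the full vertex set, so the uniform count $c=|V|-|F|$ and a single refinement/contradiction argument absorb the paper's two-case split (an edge with an endpoint outside the smaller forest's support versus an edge connecting two of its components).
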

\begin{proof}
\begin{enumerate}
    \item[(I1)] The graph with no edges is a forest since it contains no cycle.
    \item[(I2)] A subgraph of a forest is a forest
    \item[(I3)] Let $A$ and $B$ be forests with $|A|=|B|+1$. Let $k_A$, $V_A$ and $k_B$, $V_B$ denote the number of connected components and vertex sets of $A$ and $B$, respectively. It is known that $|A|=|V_A|-k_A$, similarly for $B$. Thus $|V_A|-k_A=|V_B|-k_B+1$.\\
    If $V_A\setminus V_B\neq\emptyset$, pick $e\in A$ such that one of its ends is in  $V_A\setminus V_B$. Then $B\cup \{e\}$ is a forest. However, if $V_A\setminus V_B=\emptyset$, then $V_A\subseteq V_B$. Therefore there must be an edge $e\in A$ that connects two components of $B$ otherwise $|A|\leq|B|$. Again $e$ suffices our conclusion.
    

    
    
    
\end{enumerate}
\end{proof}

Consider the cycle matroid $M(\Gamma)$. A \emph{base} is a spanning forest. A \emph{circuit} is a cycle, which is a path in which the initial and terminal vertices are the same and no other vertices repeat. The \emph{rank} of a set of edges $E'$ is the number of edges in a spanning forest of $\Gamma_{E'}$, the subgraph induced by $E'$. Alternatively, the rank of a subgraph $G\subset\Gamma$ may be computed by $n-k$ where $n$ is the number of non-isolated vertices in $G$ and $k$ is the connected components of among non-isolated vertices of $G$. An \emph{isolated vertex} is a vertex that is not a part of an edge. \\


We restrict our attention to $\Gamma=K_n$ to examine flats and the closure operator. A \emph{flat} of $M(K_n)$ is a cluster graph, $\coprod_{j=1}^{k}K_{I_j}$. For a subgraph $G$ of $K_n$, whose connected components are given by vertex sets $V_1,\ldots,V_k$. Then $\cl(G)$ is the flat $\coprod_{j=1}^{k}K_{V_j}$. We say that the closure operator is \emph{completing} each connected component. See Figure~\ref{fig:LatticeOfFlatsOfK4LabeledByEdges} for the lattice of flats for $M(K_4)$. 


\begin{figure}[h]
\begin{subfigure}{.39\textwidth}
    \centering
\begin{tikzpicture}[scale=2]
    \node (v2) at (1,1.732) [circle, draw = black, inner sep =  2pt, outer sep = 0.5pt, minimum size = 4mm, line width = 1pt] {$2$};
    \node (v3) at (2,0) [circle, draw = black, inner sep =  2pt, outer sep = 0.5pt, minimum size = 4mm, line width = 1pt] {$3$};
    \node (v4) at (0,0) [circle, draw = black, inner sep =  2pt, outer sep = 0.5pt, minimum size = 4mm, line width = 1pt] {$4$};
    \node (v5) at (1,0.577) [circle, draw = black, inner sep =  2pt, outer sep = 0.5pt, minimum size = 4mm, line width = 1pt] {$5$};
    
    \draw[line width = 1pt] (v2) -- node [midway, above right] {$e_{23}$} (v3);
    \draw[line width = 1pt] (v2) -- node [midway, left] {$e_{24}$} (v4);
    \draw[line width = 1pt] (v2) -- node [midway, below left] {$e_{25}$} (v5);
    \draw[line width = 1pt] (v3) -- node [midway, below] {$e_{34}$} (v4);
    \draw[line width = 1pt] (v3) -- node [midway, above] {$e_{35}$} (v5);
    \draw[line width = 1pt] (v4) -- node [midway, right] {$e_{45}$} (v5);
\end{tikzpicture}
    \caption{$K_4$}
    \label{fig:LabeledK4}
\end{subfigure}
\hfill
\begin{subfigure}{.59\textwidth}
\begin{tikzpicture}[scale=.75]
\draw[fill=black] (0,0) circle (3pt);
\draw[fill=black] (-5,3) circle (3pt);
\draw[fill=black] (-3,3) circle (3pt);
\draw[fill=black] (-1,3) circle (3pt);
\draw[fill=black] (1,3) circle (3pt);
\draw[fill=black] (3,3) circle (3pt);
\draw[fill=black] (5,3) circle (3pt);
\draw[fill=black] (-6,6) circle (3pt);
\draw[fill=black] (-4,6) circle (3pt);
\draw[fill=black] (-2,6) circle (3pt);
\draw[fill=black] (0,6) circle (3pt);
\draw[fill=black] (2,6) circle (3pt);
\draw[fill=black] (4,6) circle (3pt);
\draw[fill=black] (6,6) circle (3pt);
\draw[fill=black] (0,9) circle (3pt);

\draw[thick] (0,0) -- (-5,3);
\draw[thick] (0,0) -- (-3,3);
\draw[thick] (0,0) -- (-1,3);
\draw[thick] (0,0) -- (1,3);
\draw[thick] (0,0) -- (3,3);
\draw[thick] (0,0) -- (5,3);

\draw[thick] (-5,3) -- (-6,6);
\draw[thick] (-5,3) -- (-4,6);
\draw[thick] (-5,3) -- (2,6);

\draw[thick] (-3,3) -- (-6,6);
\draw[thick] (-3,3) -- (0,6);
\draw[thick] (-3,3) -- (4,6);

\draw[thick] (-1,3) -- (-6,6);
\draw[thick] (-1,3) -- (-2,6);
\draw[thick] (-1,3) -- (6,6);

\draw[thick] (1,3) -- (-4,6);
\draw[thick] (1,3) -- (-2,6);
\draw[thick] (1,3) -- (4,6);

\draw[thick] (3,3) -- (-2,6);
\draw[thick] (3,3) -- (0,6);
\draw[thick] (3,3) -- (2,6);

\draw[thick] (5,3) -- (-4,6);
\draw[thick] (5,3) -- (0,6);
\draw[thick] (5,3) -- (6,6);

\draw[thick] (0,9) -- (-6,6);
\draw[thick] (0,9) -- (-4,6);
\draw[thick] (0,9) -- (-2,6);
\draw[thick] (0,9) -- (0,6);
\draw[thick] (0,9) -- (2,6);
\draw[thick] (0,9) -- (4,6);
\draw[thick] (0,9) -- (6,6);
\end{tikzpicture}
    \caption{Lattice of flats of $M(K_4)$}
    \label{fig:LatticeOfFlatsOfK4LabeledByEdges}
\end{subfigure}
\caption{}
\end{figure}

One natural operation on a graph is to delete edges. The cycle matroid respects this operation in the sense that a subgraph induces a submatroid, called the \emph{restriction matroid}. Note that the restriction of an arbitrary matroid is a well-defined concept in general but in this paper we only use it in the context of the cycle matroid of $K_n$.

\begin{lem}
(Theorem 1 from \cite{welsh1976matroid}, section 4.2) 
Let $G$ be a subgraph of $K_n$ and denote $M(K_n)=(E,I)$. Let $I|G=\{X | X\subseteq E(G) \textrm{ and } X\in I \}$ be the restriction of forests of $K_n$ to the edge set of $G$. Then $I|G$ is the set of independent sets of the $M(G)$.
\end{lem}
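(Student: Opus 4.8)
The plan is to prove the set equality $I|G = I_G$, where $I_G$ denotes the independent sets of $M(G)$, namely the forests of $G$, by checking both inclusions after unwinding the definitions. The crucial conceptual point, which I would isolate first, is that the property of being a forest is \emph{intrinsic} to an edge set: a subgraph is a forest precisely when it contains no cycle, and a cycle is determined entirely by its own edges, independently of the ambient graph in which we view them.

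First I would unwind both sides. By definition, $X \in I|G$ means $X \subseteq E(G)$ and $X \in I$, i.e.\ $X$ is a forest of $K_n$ all of whose edges lie in $G$. On the other side, $X$ is an independent set of $M(G)$ exactly when $X$ is a forest of $G$, which again means $X \subseteq E(G)$ together with $X$ being acyclic. Thus both conditions already agree on the containment requirement $X \subseteq E(G)$, and the only thing left to reconcile is the acyclicity requirement in the two different ambient graphs.

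Next I would establish the key step: for any $X \subseteq E(G)$, the edge set $X$ is acyclic as a subgraph of $K_n$ if and only if it is acyclic as a subgraph of $G$. This holds because any cycle formed by edges of $X$ uses only edges of $X \subseteq E(G)$, so such a cycle lives simultaneously inside $G$ and inside $K_n$; hence $X$ contains a cycle in one ambient graph iff it contains the very same cycle in the other. Consequently $X$ is a forest of $K_n$ iff $X$ is a forest of $G$, and combining this with the shared containment condition gives $X \in I|G \iff X \in I_G$, which is the desired equality.

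I do not expect a genuine obstacle here: the entire content is the observation that forests and cycles are specified by the edge set alone. The only point demanding a little care is to state precisely that the subgraph induced by $X$ is the same combinatorial object whether one regards $X$ as sitting inside $G$ or inside $K_n$, so that the two notions of ``forest'' coincide on $X$; once this is made explicit, both inclusions are immediate. Note also that $M(G)$ is already known to be a matroid by the earlier lemma, so no further verification of the matroid axioms is needed.
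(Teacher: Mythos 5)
Your proof is correct, but there is nothing in the paper to compare it against: the paper states this lemma with only a citation to Welsh \cite{welsh1976matroid} and supplies no argument of its own. Your argument is the standard (and essentially the only) one for the graphic case: after unwinding both sides, the containment condition $X\subseteq E(G)$ is common to both, and acyclicity is intrinsic to the edge set $X$ itself, since any cycle formed from edges of $X$ lies simultaneously in $G$ and in $K_n$; hence ``forest of $K_n$ contained in $E(G)$'' and ``forest of $G$'' are the same condition. You also correctly flag the one point worth making explicit, namely that the subgraph spanned by $X$ is the same combinatorial object in either ambient graph. One minor caveat: you invoke the paper's earlier lemma to know $M(G)$ is a matroid, but that lemma is stated for \emph{connected} simple graphs, while a subgraph $G$ of $K_n$ need not be connected; this is harmless, since the verification of the independence axioms there never uses connectivity (the count $|A|=|V_A|-k_A$ holds for arbitrary forests), but a careful write-up should note it. Alternatively, your set equality $I|G=I_G$ combined with the general fact that a restriction of a matroid is a matroid makes the matroid structure on $I_G$ automatic, with no appeal to the earlier lemma at all.
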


That is, the forests of $G$ can be obtained by intersecting a forest of $\Gamma$ with $G$. Denote the closure operators for $M(K_n)$ and $M(G)$ as $\cl_{K_n}$ and $\cl_G$, respectively. They are related by 
\begin{align}\label{eqn:FlatsOfSubgraphAreRestrictionsOfFlatsOfGraph}
    \cl_G(A)=\cl_{K_n}(A)\cap G.
\end{align}
Unlike the closure operator, there is no ambiguity between the rank functions on $M(G)$ and $M(K_n)$ so we will denote both as $\rk(A)$.\\

Suppose the graph $G$ is obtained by removing edge $e_{25}$ from $K_4$, as labeled in Figure~\ref{fig:LabeledK4}. Analyzing its lattice of flats, we see that we obtain a sublattice of the lattice of flats of $M(K_4)$, see Figure~\ref{fig:LatticeOfFlatsOfK5MinusAnEdge}.\\


\begin{figure}[h]
    \centering
\begin{tikzpicture}[scale=.75]
\draw[fill=black] (0,0) circle (3pt);
\draw[fill=black] (-4,2) circle (3pt);
\draw[fill=black] (-2,2) circle (3pt);
\draw[fill=black] (0,2) circle (3pt);
\draw[fill=black] (2,2) circle (3pt);
\draw[fill=black] (4,2) circle (3pt);
\draw[fill=black] (-5,4) circle (3pt);
\draw[fill=black] (-3,4) circle (3pt);
\draw[fill=black] (-1,4) circle (3pt);
\draw[fill=black] (1,4) circle (3pt);
\draw[fill=black] (3,4) circle (3pt);
\draw[fill=black] (5,4) circle (3pt);
\draw[fill=black] (0,6) circle (3pt);

\draw[thick] (0,6) -- (-5,4);
\draw[thick] (0,6) -- (-3,4);
\draw[thick] (0,6) -- (-1,4);
\draw[thick] (0,6) -- (1,4);
\draw[thick] (0,6) -- (3,4);
\draw[thick] (0,6) -- (5,4);

\draw[thick] (-5,4) -- (-4,2);
\draw[thick] (-5,4) -- (-2,2);
\draw[thick] (-5,4) -- (0,2);

\draw[thick] (-3,4) -- (-4,2);
\draw[thick] (-3,4) -- (2,2);

\draw[thick] (-1,4) -- (0,2);
\draw[thick] (-1,4) -- (2,2);
\draw[thick] (-1,4) -- (4,2);

\draw[thick] (1,4) -- (-2,2);
\draw[thick] (1,4) -- (4,2);

\draw[thick] (3,4) -- (-4,2);
\draw[thick] (3,4) -- (4,2);

\draw[thick] (5,4) -- (-2,2);
\draw[thick] (5,4) -- (2,2);

\draw[thick] (0,0) -- (-4,2);
\draw[thick] (0,0) -- (-2,2);
\draw[thick] (0,0) -- (0,2);
\draw[thick] (0,0) -- (2,2);
\draw[thick] (0,0) -- (4,2);
\end{tikzpicture}
    \caption{Lattice of flats of $M(\Gamma)$}
    \label{fig:LatticeOfFlatsOfK5MinusAnEdge}
\end{figure}
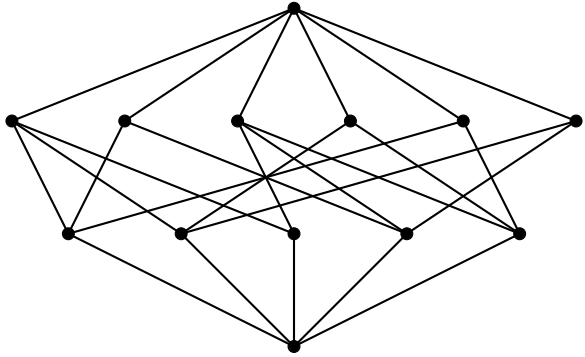

Next we write a technical lemma used for Proposition~\ref{prop:CriterionForInjectivityOfprojg}. The proof is purely graph theoretic so we prove it here.

\begin{lem}\label{lem:RankOfASubgraphIFFCommonSpanningForest}
Let $\Gamma$ be a simple graph, not necessarily connected and let $G$ be a subgraph of $\Gamma$. Then $\rk(\Gamma)=\rk(G)$ if and only if $G$ and $\Gamma$ share a common spanning forest.
\end{lem}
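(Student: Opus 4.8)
The plan is to prove both implications by leaning on the characterization, recalled just above in the excerpt, that $\rk$ of a (sub)graph equals the number of edges in any of its spanning forests, i.e. in any maximal acyclic subgraph. Throughout I would regard $G$ as a subset of the edges of $\Gamma$, so that a spanning forest of $G$ is a maximal forest drawn from $E(G)$ while a spanning forest of $\Gamma$ is a maximal forest drawn from all of $E(\Gamma)$. Since $E(G)\subseteq E(\Gamma)$, monotonicity of the rank function (axiom \ref{ax:R2'}) already gives $\rk(G)\le\rk(\Gamma)$, and the whole statement will reduce to comparing these two maximal sizes.

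The reverse implication I expect to be immediate from the definitions. If $F$ is a forest that is simultaneously a spanning forest of $G$ and of $\Gamma$, then counting its edges in the two ambient graphs gives $\rk(G)=|E(F)|=\rk(\Gamma)$, so the ranks coincide with no further work.

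For the forward implication, I would assume $\rk(\Gamma)=\rk(G)$ and choose any spanning forest $F$ of $G$. By construction $F$ is an acyclic subgraph of $\Gamma$ built from edges of $G$, with $|E(F)|=\rk(G)=\rk(\Gamma)$. The claim is that $F$ is already a spanning forest of $\Gamma$: if it were not maximal, there would be an edge $e\in E(\Gamma)\setminus E(F)$ with $F\cup\{e\}$ still acyclic, whence $F\cup\{e\}$ would be a forest of $\Gamma$ with $\rk(\Gamma)+1$ edges, contradicting that $\rk(\Gamma)$ is the maximum size of a forest of $\Gamma$. Hence $F$ is a maximal forest of $\Gamma$, i.e. a common spanning forest, which finishes the proof.

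The only real subtlety, and the step I would be most careful to spell out, is the bookkeeping of vertices hidden inside the word \emph{spanning}. A spanning forest of $G$ need not a priori meet every non-isolated vertex of $\Gamma$, so one might worry that $F$ fails to span $\Gamma$. The maximality argument dispenses with this automatically: a maximal forest of $\Gamma$ must touch every non-isolated vertex, since otherwise a pendant edge at an untouched vertex could be adjoined without creating a cycle. Thus the rank equality is exactly what forces $G$ and $\Gamma$ to have the same non-isolated vertices and forces $F$ to span them all. I would state this observation explicitly rather than leave it implicit, because it is precisely where the hypothesis $\rk(\Gamma)=\rk(G)$ does its work.
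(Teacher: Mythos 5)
Your proof is correct and takes essentially the same route as the paper: start from a spanning forest of $G$ and use the rank equality to conclude it is already a spanning forest of $\Gamma$. The only cosmetic difference is that the paper first extends the forest $T'$ to a spanning forest $T$ of $\Gamma$ with $T\cap G=T'$ and then forces $T=T'$ by counting edges, whereas you argue maximality directly from $\rk(\Gamma)$ being the maximum size of a forest of $\Gamma$; your explicit remark about non-isolated vertices spells out a point the paper leaves implicit.
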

\begin{proof}
The backwards direction follows from the definition of rank. So let us assume that $G$ and $\Gamma$ have the same rank. Let $T'$ be a spanning forest of $G$. Then there exists $T$ a spanning forest of $\Gamma$ such that $T\cap G=T'$. By assumption we know that $\rk(T)=\rk(T')$ and therefore they have the same number of edges. Since $T'$ is a subgraph of $T$, they must be the equal.
\end{proof}

Here we define the complete multipartite graph and discuss some facts about it. A \emph{$k$-partite graph} (or multipartite) is a graph on $n=\sum_{i=1}^k n_i$ vertices, partitioned into $k$ sets (called \emph{independent sets}) such that no two vertices from the same set are adjacent. A \emph{complete} $k$-partite graph denoted $K_{n_1,\ldots,n_k}$ is a $k$-partite graph such that every pair of vertices in different sets are adjacent. Alternatively, we may obtain a complete $k$-partite graph by removing the disjoint cliques on vertices given by the independent sets. Meaning that the complement of a complete mutlipartite graph is a cluster graph.\\

The following lemma describes some useful characterizations of a complete multipartite graph. The proof follows from graph theoretic properties.

\begin{lem}\label{lem:AlternateDescriptionsOfCompleteMultipartiteGraphs}
Let $G$ be a graph. The following are equivalent:
\begin{enumerate}
    \item $G$ is a complete multipartite graph.
    \item If $e_{ij}$ is an edge of $G$, then for any other vertex $v_k$ either $e_{ik}$ or $e_{jk}$ is an edge of $G$.
    \item There do not exist 3 vertices whose induced subgraph has exactly 1 edge. 
\end{enumerate}
\end{lem}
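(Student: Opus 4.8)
The plan is to prove this as a cycle of implications $(1)\Rightarrow(2)\Rightarrow(3)\Rightarrow(1)$, since each individual step is a short graph-theoretic argument and the statements are naturally ordered by how directly they encode the defining property of complete multipartite graphs. Throughout I would use the characterization stated in the excerpt: the complement of a complete multipartite graph is a cluster graph (a disjoint union of cliques), where the cliques are exactly the independent sets $V_1,\ldots,V_k$. Equivalently, being in the same independent set is an equivalence relation, and two vertices are adjacent in $G$ precisely when they lie in different parts.

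For $(1)\Rightarrow(2)$, suppose $G=K_{n_1,\ldots,n_k}$ and $e_{ij}$ is an edge, so $v_i$ and $v_j$ lie in different parts, say $v_i\in V_a$ and $v_j\in V_b$ with $a\neq b$. Given any other vertex $v_k$ in some part $V_c$, the index $c$ cannot equal both $a$ and $b$; if $c\neq a$ then $e_{ik}$ is an edge, and if $c\neq b$ then $e_{jk}$ is an edge. This is the one-line pigeonhole observation that a vertex cannot share a part with both endpoints of an edge. For $(2)\Rightarrow(3)$, I would argue by contraposition: if there exist three vertices $v_i,v_j,v_k$ whose induced subgraph has exactly one edge, say that edge is $e_{ij}$, then neither $e_{ik}$ nor $e_{jk}$ is an edge, which directly violates condition $(2)$ applied to the edge $e_{ij}$ and the third vertex $v_k$.

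The implication $(3)\Rightarrow(1)$ is the step I expect to be the main obstacle, since it is the only direction that must build global structure (a partition into parts) out of a purely local forbidden-subgraph hypothesis. The key idea is to define a relation on $V$ by declaring $v_i\sim v_j$ when $v_i=v_j$ or $e_{ij}$ is \emph{not} an edge of $G$, and to show that condition $(3)$ forces $\sim$ to be an equivalence relation; then the equivalence classes will serve as the independent sets $V_1,\ldots,V_k$. Reflexivity and symmetry are immediate from the definition, so the real content is transitivity. Suppose $v_i\sim v_j$ and $v_j\sim v_k$ with all three distinct; then $e_{ij}$ and $e_{jk}$ are both non-edges, so among the three vertices $v_i,v_j,v_k$ the only possible edge is $e_{ik}$. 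If $e_{ik}$ were an edge, the induced subgraph on these three vertices would have exactly one edge, contradicting $(3)$; hence $e_{ik}$ is a non-edge and $v_i\sim v_k$.

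Once transitivity is established, I would finish by checking that the equivalence classes exhibit $G$ as complete multipartite. By construction, two vertices in the same class are non-adjacent, so each class is an independent set; and two vertices in different classes are, by definition of $\sim$, adjacent, so every pair across distinct parts is joined by an edge. This is exactly the definition of $K_{n_1,\ldots,n_k}$ with $n_i$ the sizes of the classes, completing the cycle. The whole argument is elementary, but I would be careful in $(3)\Rightarrow(1)$ to handle the degenerate cases where some of the three vertices coincide, since transitivity of $\sim$ is only asserting content when $v_i,v_j,v_k$ are pairwise distinct.
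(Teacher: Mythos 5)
Your proof is correct, and in fact the paper gives no proof of this lemma at all---it simply asserts that ``the proof follows from graph theoretic properties''---so your write-up supplies the missing argument rather than competing with one. Your route is the canonical one and matches the paper's own framing: the paper notes that the complement of a complete multipartite graph is a cluster graph, which is exactly your observation that non-adjacency (together with equality) is an equivalence relation, and your transitivity step is precisely where condition (3) does its work, since a single edge $e_{ik}$ among $\{v_i,v_j,v_k\}$ with $e_{ij}$, $e_{jk}$ non-edges is the forbidden configuration. The implications $(1)\Rightarrow(2)$ and $(2)\Rightarrow(3)$ are handled correctly (the pigeonhole observation and the contrapositive, respectively), and your attention to the degenerate cases in transitivity is appropriate, so nothing is missing.
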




      
\section{Tropical Moduli Spaces as Bergman Fans}




Consider the space of genus 0, $n$-marked abstract tropical curves $\Mzerontrop$. That is, points of $\Mzerontrop$, denoted $\C$, are in bijection with metrized trees with \emph{bounded edges} having finite length and $n$ unbounded labeled edges called \emph{ends}. By forgetting the lengths of the bounded edges of $\C$ we get a tree with labeled ends called the \emph{combinatorial type} of $\C$. 

We may also think of $\Mzerontrop$ as a cone complex. Curves of a fixed combinatorial type with $d$ bounded edges are parameterized by $\RR_{\geq0}^{d}$. We obtain $\Mzerontrop$ by gluing several copies of $\RR_{\geq0}^{n-3}$ via appropriate face morphisms, one for each trivalent combinatorial type. 
Note that a $(d-m)$-dimensional face of a $d$-dimensional cone corresponds to a combinatorial type where the lengths of $m$ bounded edges are shrunk to 0.

Furthermore we discuss the embedding of $\Mzerontrop$ into a real vector space as a balanced, weighted, pure-dimensional polyhedral fan as in \cite{gathmann2009tropical}. A \emph{weighted fan} $(X,\omega)$ is a fan $X$ in $\RR^n$ where each cone $\sigma$ has a positive integer weight associated to it, denoted $\omega(\sigma)$. A weighted fan is \emph{balanced} if for all cones $\tau$ of codimension one, the weighted sum of primitive normal vectors of the top-dimensional cones $\sigma_i\supset\tau$ is 0, i.e.
$$\sum_{\sigma_i\supset\tau}\omega(\sigma_i)\cdot u_{\sigma_i/\tau}=0\in V/V_\tau$$
where $u_{\sigma_i/\tau}$ is the primitive normal vector, $V$ is the ambient real vector space, and $V_\tau$ is the smallest vector space containing the cone $\tau$. See \emph{Construction 2.3} of \cite{gathmann2009tropical} for a construction of the primitive normal vector. 

Denote $\dist(i,j)$ as the sum of lengths of all bounded edges between the ends marked by $i$ and $j$. Then vector
$$d(\C)=(\dist(i,j))_{i<j}\in\RR^{\binom{n}{2}}/\Phi(\RR^n)=Q_n$$
identifies $\C$ uniquely, where $\Phi:\RR^n\rightarrow\RR^{\binom{n}{2}}$ by $x\mapsto(x_i+x_j)_{i<j}$.

The combinatorial type of an abstract $n$-marked tropical curve $\C$ with one bounded edge \emph{splits} the set of ends $[n]$ into $I\sqcup I^c$. We denote the ray corresponding to $\C$ by $d(\C)=\rho_I=\rho_{I^c}$. We adopt the convention that in a split the set $I$ will not contain $1$. In \cite{kerber2009intersecting}, Kerber and Markwig proved the relation 
\begin{align}
    \sum_{S\in V_1}\rho_S=0\in Q_n \label{eqn:SumOfDivisorRaysEqualsZero}
\end{align} 
where $V_1=\{I|1\not\in I, |I|=2\}$. Also they showed that for $I\subset[n]\setminus\{1\}$ 
\begin{align}
\sum_{S\in\binom{I}{2}}\rho_S=\rho_I+\Phi(x)\in\RR^{\binom{n}{2}}\label{eqn:AnyRayIsASumOfDivisorRays}
\end{align}
where $\binom{I}{2}$ is the set of all size-2 subsets of a set $I$ and $x\in\RR^n$.

\begin{rem}\label{rem:BasisOfRaysEqualsDivisorCombTypes}
In other words, Equation~\eqref{eqn:SumOfDivisorRaysEqualsZero} means that any set of $\binom{n-1}{2}-1$ of combinatorial types of curves with one bounded edge and a trivalent vertex not containing the end $1$ corresponds to a basis of $Q_n$. Equation~\eqref{eqn:AnyRayIsASumOfDivisorRays} gives us the unique way to write any ray of $Q_n$ as a linear combination of our basis. 
\end{rem}

Now consider the combinatorial type of a tropical curve $\C$ with $d$ bounded edges. We construct $d$ splits, $I_1,\ldots,I_{d}$, in the following way. A split $I_j$ is defined by the combinatorial type you obtain contracting all but the $j$th bounded edge of $\C$. The cone corresponding to the combinatorial type of $\C$ is the span of rays $\rho_{I_1},\ldots,\rho_{I_d}$.\\


\subsection{Tropical moduli spaces of rational stable curves as Bergman fans}

Given any matroid $M$ with ground set $E$ we define the \emph{Bergman fan} which is a polyhedral fan $\BM\subseteq\RR^{|E|}$. The Bergman fan is 
$$ \BM:=\{ w\in\RR^{|E|} ~|~ M_w \textrm{ is loop-free}\},$$
where $M_w$ is the matroid on $E$ whose bases are all bases $B$ of $M$ which have minimal $w$-weight $\Sigma_{i\in B}w_i$. A \emph{loop} of a matroid is an element whose rank is 0.  A more useful definition for our purposes is from Ardila and Klivans \cite{ardila2006bergman}. They showed that $\BM$ is a polyhedral cone complex that coincides with the order complex of the lattice of flats of $M$. An \emph{order complex} of a poset $P$ is defined to be the simplicial complex whose vertices are the elements of $P$ and whose faces are chains of elements of $P$. 

In other words, given a chain of flats (COF) $\F$ in $M$
$$\emptyset\subsetneq F_1\subsetneq \cdots\subsetneq F_{r}\subsetneq F_{r+1}=E, $$
we let $C_\F$ be the cone in $\RR^{|E|}$ spanned by the rays $\rho_{F_1},\ldots,\rho_{F_{r+1}},$ with lineality space spanned by $\rho_E$. Here $\rho_F=-\Sigma_{e\in F}v_e$, where $v_e$ is a standard basis vector of $\RR^{|E|}$. 

\begin{rem}
Any Bergman fan contains the vector $(1,1,\ldots,1)$ as a ray. So rather than studying $\BM$ we quotient out the lineality space $L$, spanned by the vector $(1,1,\ldots,1)$, to get 
$$\BMprime:=\BM/L.$$
Thus we identify a COF $\F$ by its nontrivial flats $F_1,\ldots,F_r$ and denote $r$ to be its \emph{length}. Note that a COF of length $r$ corresponds to a cone of dimension $r$ in the Bergman fan. We call this polyhedral structure the \emph{chains-of-flats} subdivision of $\BMprime$.
\end{rem}


 
\begin{nota}
For the rest of the paper we write $\Gamma$ to represent the graph and the cycle matroid of $\Gamma$. We continue to use $|E(\Gamma)|$ as the number of edges in $\Gamma$.
\end{nota}

It has been shown in Section 4 of \cite{ardila2006bergman} and Example 7.2 of \cite{franccois2013diagonal} that the supports of $\Mzerontrop$ and $\BKprime$  coincide. Ardila and Klivans describe a bijection in terms of \emph{equidistant $(n-1)$-trees} while Fran\c{c}ois and Rau give a bijection induced by a $\ZZ$-linear transformation of the ambient spaces. We give an explicit constructive bijection between their stratifying sets. However, we must refine the notion of tropical curves to radially aligned tropical curves. These curves are analogous to the equidistant trees of Ardila and Klivans.\\

We define the \emph{root vertex} of a tropical curve $\C$ to be the vertex containing the end with marking 1 and we denote it $\V_0$. Given a labeling of the non-root vertices of $\C$, $\V_1,\ldots,\V_d$, we define $\ell_{i}$ to be the distance from the root vertex to $\V_i$. Also we set $\ell_0=0$.

\begin{defn}
A \emph{radially aligned} tropical curve $\C$ is a tropical curve with the additional data of a weak ordering on the vertices given by $\{\ell_{i}\}_{i=0}^{d}$. Define $\Mzeronrad$ as the parameter space of genus 0, $n$-marked radially aligned abstract tropical curves. Similar to before, we get the \emph{radially aligned combinatorial type} by forgetting the lengths but keeping the weak ordering on the vertices.
\end{defn}

\begin{rem}
A \emph{weak ordering} of a set can be viewed as an ordered partition. Meaning a partition of the vertices into disjoint subsets together with a total ordering on the subsets. Thus the number of cones of $\Mzeronrad$ can be counted using ordered Bell numbers or Fubini numbers. We can see this fact highlighted in Example~\ref{exam:Mzero7radFubini}. 
\end{rem}

Clearly the support of $\Mzerontrop$ and $\Mzeronrad$ are the same. But as fans, $\Mzeronrad$ is a refinement of $\Mzerontrop$. We call this refinement the \emph{radially aligned subdivision}. The next two examples illustrate particular 3-dimensional cones of $\Mzerontrop$ that become subdivided in the radially aligned subdivision.

\begin{exam}\label{exam:Mzero6rad}
Consider the combinatorial type $\C\in\M_{0,6}^{\textrm{trop}}$ with splits $I_1=\{2,3\},~I_2=\{4,5,6\},~I_3=\{5,6\}$, see Figure~\ref{fig:CombTypeOfMzero6rad}. In $\M_{0,6}^{\textrm{trop}}$, this combinatorial type corresponds to a single 3-dimensional cone with faces consisting of three 2-dimensional cones, and three rays. The 2-dimensional faces correspond to the combinatorial types obtained by shrinking the length of a bounded edge to 0. The rays correspond to contracting 2 bounded edges. In $\M_{0,6}^{\textrm{rad}}$, the radially aligned subdivision yields three distinct isomorphism classes, i.e. three 3-dimensional cones. By contracting the various bounded edges it turns out that there are seven 2-dimensional cones and five rays see Figure~\ref{fig:ConeOfMzero6rad}. The weak orderings are compiled in the 15 strings of inequalities listed below. 

\begin{minipage}{.3\textwidth}
\begin{align*}
    0=\ell_1<\ell_2=\ell_3\\
    0<\ell_1=\ell_2=\ell_3\\
    0=\ell_2=\ell_3<\ell_1\\
    0=\ell_2<\ell_1=\ell_3\\
    0=\ell_2=\ell_1<\ell_3
\end{align*} 
\end{minipage}
\begin{minipage}{.3\textwidth}
\begin{align*}
    0=\ell_1<\ell_2<\ell_3\\
    0<\ell_1<\ell_2=\ell_3\\
    0<\ell_2=\ell_3<\ell_1\\
    0=\ell_2<\ell_3<\ell_1\\
    0=\ell_2<\ell_1<\ell_3\\
    0<\ell_2=\ell_1<\ell_3\\
    0<\ell_2<\ell_1=\ell_3
\end{align*}
\end{minipage}
\begin{minipage}{.3\textwidth}
\begin{align*}
    0<\ell_1<\ell_2<\ell_3\\
    0<\ell_2<\ell_1<\ell_3\\
    0<\ell_2<\ell_3<\ell_1
\end{align*}
\end{minipage}

A couple of notes about these strings of inequalities. First, the number of strict inequalities is the same as the dimension of the corresponding cone, i.e. the columns, from left to right, correspond to rays, 2D cones, and 3D cones. Also notice that the only restriction on ordering the $\ell_i$'s is that $\ell_2\leq\ell_3$.

\end{exam}

\begin{figure}[h]
\begin{subfigure}{.48\textwidth}
    \centering
\begin{tikzpicture}
\fill (0,0) circle (0.10);
\node at (0.2,-0.3) {\cred $v_1$};
\draw[very thick] (0,0) -- (-0.5,0.5); 
\node at (-0.7,0.5) {$2$};
\draw[very thick] (0,0) -- (-0.5,-0.5); 
\node at (-0.7,-0.5) {$3$};

\draw[very thick] (0,0) -- (1.5,0);
\node at (0.75,0.3) {\cb $\ell_1$};

\fill (1.5,0) circle (0.10);
\node at (1.5,-0.3) {\cred $v_0$};
\draw[very thick] (1.5,0) -- (1.5,0.5);
\node at (1.5,0.7) {$1$};

\draw[very thick] (1.5,0) -- (3,0);
\node at (2.25,0.3) {\cb $\ell_2$};

\fill (3,0) circle (0.10);
\node at (3,-0.3) {\cred $v_2$};
\draw[very thick] (3,0) -- (3,0.5);
\node at (3,0.7) {$4$};

\draw[very thick] (3,0) -- (5,0);
\node at (4,0.3) {\cb $\ell_3-\ell_2$};

\fill (5,0) circle (0.10);
\node at (4.8,-0.3) {\cred $v_3$};
\draw[very thick] (5,0) -- (5.5,0.5); 
\node at (5.7,0.5) {$5$};
\draw[very thick] (5,0) -- (5.5,-0.5); 
\node at (5.7,-0.5) {$6$};
\end{tikzpicture}    
    \caption{Tropical curve of $\M_{0,6}^{\textrm{trop}}$ with splits $I_1=\{2,3\},~I_2=\{4,5,6\},~I_3=\{5,6\}$}
    \label{fig:CombTypeOfMzero6rad}
\end{subfigure}
\hfill
\begin{subfigure}{.48\textwidth}
    \centering
\begin{tikzpicture}[scale=0.9]
    \draw[thick] (30:3cm) -- (150:3cm);
    \node at (90:1.9cm) [] {$1$};
    \draw[thick] (150:3cm) -- (270:3cm);
    \node at (170:2.3cm) [] {$2$};
    \node at (250:2.3cm) [] {$3$};
    \draw[thick] (270:3cm) -- (30:3cm);
    \node at (290:2.3cm) [] {$4$};
    \node at (10:2.3cm) [] {$5$};
    \draw[thick] (210:1.5cm) -- (30:3cm);
    \node at (45:1.4cm) [] {$6$};
    \draw[thick] (210:1.5cm) --(330:1.5cm);
    \node at (240:1.2cm) [] {$7$};
    
    \draw[fill=black] (30:3cm) circle (3pt);
    \node at (28:3.3cm) [] {E};
    \draw[fill=black] (150:3cm) circle (3pt) node[] {};
    \node at (152:3.3cm) [] {A};
    \draw[fill=black] (270:3cm) circle (3pt) node[] {};
    \node at (270:3.4cm) [] {C};
    \draw[fill=black] (210:1.5cm) circle (3pt) node[] {};
    \node at (210:1.9cm) [] {B};
    \draw[fill=black] (330:1.5cm) circle (3pt) node[] {};
    \node at (330:1.9cm) [] {D};
    
    \node at (145:1cm) [] {I};
    \node at (340:0.7cm) [] {II};
    \node at (270:1.5cm) [] {III};
\end{tikzpicture}
    \caption{A slice of a cone of $\M_{0,6}^{\textrm{rad}}$. Rays are labeled by letters A-E, 2D cones are labeled by numbers 1-7, and 3D cones are labeled by numerals I, II, and III.}
    \label{fig:ConeOfMzero6rad}
\end{subfigure}
\caption{}
\end{figure}


\begin{exam}\label{exam:Mzero7radFubini}
Now consider the combinatorial type $\C\in\M_{0,7}^{\textrm{trop}}$ with splits $I_1=\{2,3\},~I_2=\{4,5\},~I_3=\{6,7\}$, see Figure~\ref{fig:CombTypeOfMzero7rad}. Similar to Example~\ref{exam:Mzero6rad}, in $\M_{0,7}^{\textrm{trop}}$, this combinatorial type corresponds to a single 3-dimensional cone with faces consisting of three 2-dimensional cones, and three rays. The radially aligned subdivision yields six 3-dimensional cones, twelve 2-dimensional cones, and seven rays, see Figure~\ref{fig:ConeOfMzero7rad}. If we also consider the 0-dimensional cone which is the intersection of all of these cones there are 26 in total. We may also obtain 26 by doubling the ordered Bell number on a set of three elements. The factor of 2 is due to having a distinguished least element of $\ell_0$.
\end{exam}

\begin{figure}[h]
\begin{subfigure}{.48\textwidth}
    \centering
\begin{tikzpicture}
\fill (0,0) circle (0.10);
\node at (0.2,-0.3) {\cred $v_1$};
\draw[very thick] (0,0) -- (-0.5,0.5); 
\node at (-0.7,0.5) {$2$};
\draw[very thick] (0,0) -- (-0.5,-0.5); 
\node at (-0.7,-0.5) {$3$};

\draw[very thick] (0,0) -- (1.5,0);
\node at (0.75,0.3) {\cb $\ell_1$};

\fill (1.5,0) circle (0.10);
\node at (1.8,-0.3) {\cred $v_0$};
\draw[very thick] (1.5,0) -- (1.5,0.5);
\node at (1.5,0.7) {$1$};

\draw[very thick] (1.5,0) -- (3,0);
\node at (2.25,0.3) {\cb $\ell_2$};
\draw[very thick] (1.5,0) -- (1.5,-1.5);
\node at (1.2,-0.75) {\cb $\ell_3$};

\fill (1.5,-1.5) circle (0.10);
\node at (1.8,-1.3) {\cred $v_2$};
\draw[very thick] (1.5,-1.5) -- (1,-2);
\node at (0.8,-2) {$4$};
\draw[very thick] (1.5,-1.5) -- (2,-2);
\node at (2.2,-2) {$5$};

\fill (3,0) circle (0.10);
\node at (2.8,-0.3) {\cred $v_3$};
\draw[very thick] (3,0) -- (3.5,0.5); 
\node at (3.7,0.5) {$6$};
\draw[very thick] (3,0) -- (3.5,-0.5); 
\node at (3.7,-0.5) {$7$};
\end{tikzpicture}    
    \caption{Tropical curve of $\M_{0,7}^{\textrm{trop}}$ with splits $I_1=\{2,3\},~I_2=\{4,5\},~I_3=\{6,7\}$}
    \label{fig:CombTypeOfMzero7rad}
\end{subfigure}
\hfill
\begin{subfigure}{.48\textwidth}
    \centering
\begin{tikzpicture}[scale=0.8]
    \draw[thick] (30:3cm) -- (150:3cm) -- (270:3cm) -- cycle;
    
    \foreach \x in {90,210,330}
{
    \draw[thick] (\x:1.5cm) -- (\x:-3cm);
}
    \draw[fill=black] (0:0cm) circle (3pt) node[] {};
    \draw[fill=black] (30:3cm) circle (3pt) node[] {};
    \draw[fill=black] (150:3cm) circle (3pt) node[] {};
    \draw[fill=black] (270:3cm) circle (3pt) node[] {};
    \draw[fill=black] (90:1.5cm) circle (3pt) node[] {};
    \draw[fill=black] (210:1.5cm) circle (3pt) node[] {};
    \draw[fill=black] (330:1.5cm) circle (3pt) node[] {};
\end{tikzpicture}\\
    \caption{A slice of a cone of $\M_{0,7}^{\textrm{rad}}$}
    \label{fig:ConeOfMzero7rad}
\end{subfigure}
\caption{}
\end{figure}

\begin{lem}\label{lem:MzeronradIsEqualToBKprimeAsConeComplexes}
$\Mzeronrad=\BKprime$ as polyhedral cone complexes. In particular, there is a bijection $\Psi$ between chains of flats of $K_{n-1}$ and radially aligned combinatorial types of $\Mzeronrad$.
\end{lem}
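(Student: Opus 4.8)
The plan is to construct an explicit bijection $\Psi$ from chains of flats of $K_{n-1}$ to radially aligned combinatorial types, and then promote it to an identification of cone complexes by exhibiting a linear isomorphism of the two ambient spaces that carries rays to rays and cones to cones. Throughout I identify the vertex set of $K_{n-1}$ with the ends $\{2,\dots,n\}$, so that size-two subsets $\{i,j\}$ correspond to edges $e_{ij}$.

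First I would describe $\Psi^{-1}$ (radially aligned type $\mapsto$ chain of flats), which is the conceptual heart. Given a radially aligned curve $\C$, for each pair of ends $i,j\neq 1$ let $h(i,j)$ be the distance from $\V_0$ to the vertex at which the paths from $\V_0$ to $i$ and to $j$ diverge. For a radius $t>0$ declare $i\sim_t j$ when $h(i,j)\geq t$; this is an equivalence relation whose blocks define a partition, hence a flat of $K_{n-1}$ (a cluster graph, by the description of flats of $M(K_n)$). As $t$ increases the partition refines, and it changes precisely at the distinct positive values $0<t_1<\cdots<t_r$ of the weak ordering, since every non-root vertex is at least trivalent and therefore forces a split at its radius. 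Recording the flats read off on the intervals $(t_{s-1},t_s)$ gives a coarse-to-fine sequence, equivalently a chain $F_1\subsetneq\cdots\subsetneq F_r$. Two endpoints need checking: an end attached at $\V_0$ has $h(i,j)=0$ for all $j$, so such ends are singletons at every positive radius, and trivalence of $\V_0$ forces at least two classes, which shows the coarsest flat is a proper subset of $E$; and at radius $>t_r$ the partition is discrete, matching the empty flat. The length $r$ equals the number of distinct positive distances, which is the dimension of the radially aligned cone, so dimensions agree.

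Next I would set up the ambient identification. Define a linear map $\phi\colon Q_n\to\RR^{|E|}/L$ by $\rho_{\{i,j\}}\mapsto -v_{e_{ij}}$, the Bergman ray of the rank-one flat $\{e_{ij}\}$. By Remark~\ref{rem:BasisOfRaysEqualsDivisorCombTypes} these divisor rays span $Q_n$ subject to the single relation \eqref{eqn:SumOfDivisorRaysEqualsZero}, while the $-v_{e_{ij}}$ span $\RR^{|E|}/L$ subject to $\sum_e v_e\equiv 0 \bmod L$; since these relations correspond under $\{i,j\}\leftrightarrow e_{ij}$, the map $\phi$ is a well-defined linear isomorphism. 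Applying \eqref{eqn:AnyRayIsASumOfDivisorRays} then gives $\phi(\rho_I)=\sum_{\{a,b\}\subset I}(-v_{e_{ab}})=\rho_{K_I}$, so the split ray $\rho_I$ is sent to the Bergman ray of the clique flat on $I$.

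Finally, and this is the step I expect to be the main obstacle, I would show $\phi$ carries each radially aligned cone onto the Bergman cone of its chain. For a fixed type the weak ordering groups vertices into level sets, and since distances strictly increase along root-paths each level set is an antichain; the cone is therefore the simplicial cone $0\leq t_1\leq\cdots\leq t_r$, with extreme rays $g^{(k)}$ obtained by collapsing levels $1,\dots,k-1$ to the root and equating levels $k,\dots,r$. Each $g^{(k)}$ yields a one-positive-distance curve whose branches from $\V_0$ are exactly the blocks of the flat $G_k$ read off on $(t_{k-1},t_k)$, so its ray direction is $\sum_j\rho_{I_j}$ over those blocks, and by the clique formula $\phi\bigl(\sum_j\rho_{I_j}\bigr)=\sum_j\rho_{K_{I_j}}=\rho_{G_k}$. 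Thus $\phi$ sends the extreme rays bijectively to $\rho_{G_1},\dots,\rho_{G_r}$, which span the Bergman cone of the chain $G_r\subsetneq\cdots\subsetneq G_1$; as both are the simplicial cones on these rays, $\phi$ maps one onto the other. The real difficulty is verifying that the collapse genuinely produces $G_k$ (tracking ends attached at interior vertices and at the root) and that the $g^{(k)}$ are the true extreme rays of the radially aligned subdivision. Matching extreme rays and faces makes $\phi$ a face-preserving bijection of cones, and building a rooted leveled tree from a nested chain of partitions supplies the inverse of $\Psi$, completing the identification $\Mzeronrad=\BKprime$.
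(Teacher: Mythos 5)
Your proposal is correct and takes essentially the same route as the paper: your $\sim_t$ blocks (ends whose root-paths diverge at radius $\geq t$) are exactly the paper's level sets $\mathscr{L}_i$ obtained by deleting everything within radius $d_i$, so your chain $G_r\subsetneq\cdots\subsetneq G_1$ is the paper's COF $\F_\C$, and your sketched reconstruction of a leveled rooted tree from a nested chain of partitions is precisely the paper's explicit attachment construction via the inclusion maps $\phi_i$. The additional material you verify --- the isomorphism sending $\rho_{\{i,j\}}\mapsto -v_{e_{ij}}$, its well-definedness via the relations \eqref{eqn:SumOfDivisorRaysEqualsZero} and \eqref{eqn:AnyRayIsASumOfDivisorRays}, the clique formula $\rho_I\mapsto\rho_{K_I}$, and the matching of the extreme rays $g^{(k)}$ of a simplicial radially aligned cone with the Bergman rays $\rho_{G_k}$ --- is sound and supplies the detail that the paper defers to Remark~\ref{rem:MzeronradIsEqualToBKprimeAsFans} without proof.
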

\begin{proof}
Recall that a flat $F$ of $K_{n-1}$ corresponds to a cluster graph, i.e. $F$ is a disjoint union of complete graphs.
Consider the COF of length 1 given by $F$ with vertex sets $V_1, \dots, V_k$. The abstract tropical curve corresponding to $F$ can be constructed in the following way:

Denote the root vertex $\V_0$. Attach ends labeled by $[n]\setminus\bigcup_{i=1}^{k}V_i$ and attach $k$ bounded edges of the same length where the new vertices are labeled $\V_1,\ldots,\V_k$. Finally, on each $\V_i$ attach ends marked by the vertex set $V_i$.

Now consider a COF $\F=F_1\subset\cdots\subset F_r$. Write each flat as
$$F_i=\coprod_{j=1}^{k_i} K_{I^i_j}$$
where $I^i_j$ is the vertex set of the $j$th complete graph at the $i$th step in the chain and $k_i$ is the number of connected components of $F_i$. Define for $i=1,\ldots,r$ a chain of inclusion maps
$$\phi_i:I_{l}^{i}\longmapsto I_{j}^{i+1}$$
such that $K_{I_{l}^{i}} \subset K_{I_{j}^{i+1}}$. Also define the trivial map
$$\phi_0:\emptyset\longrightarrow\coprod_{j=1}^{k_1}I_{j}^{1}.$$
The following construction gives a cone in $\Mzeronrad$ by producing a radially aligned curve belonging to it. Denote the root vertex $\V_0$ and attach ends labeled by $[n]\setminus\bigcup_{j=1}^{k_r} I_{j}^{r}$. To the root vertex attach $k_r$ bounded edges of length 1 with a vertex at the end of each edge. For each $i=r-1,\ldots,0$ and $j=1,\ldots,k_{i+1}$ attach ends given by 
$$I_{j}^{i+1}\setminus \phi_{i}^{-1}(I_{j}^{i+1}).$$ 
If $\phi_{i}^{-1}(I_{j}^{i+1})\not=\emptyset$, add a number of bounded edges of length 1 equal to the number of sets $I_{l}^i$ such that $\phi_i(I_{l}^i)\subseteq I_{j}^{i+1}$ and add a vertex at the end of each edge. After all iterations stabilize all vertices with valence 2, i.e. remove all vertices that have 2 bounded edges and no ends. The radial alignment is given by the lengths of compact edges after stabilizing the 2-valent vertices.\\

Let $\rho$ be a ray of $\Mzeronrad$, i.e. $\rho$ corresponds to an abstract tropical curve with $d$ bounded edges of the same length and disjoint splits $I_1,\ldots,I_d$. The corresponding flat is $F_{\rho}=\coprod_{j=1}^d K_{I_j}$. Note that rays of $\Mzerontrop$ correspond to abstract tropical curves with exactly one bounded edge. Later we see that the corresponding flats are precisely the $1$-connected flats of $K_{n-1}$.

Let $\C$ be an arbitrary radially aligned combinatorial type with and ordered partition of the vertices given by the distances $d_1,\ldots,d_r$. Define the $i$th \emph{level} $\mathscr{L}_i$ in the following way:
\begin{enumerate}
    \item Delete all vertices and ends strictly within the radius $d_i$ of the root vertex and delete all bounded edges which are at least partially contained within radius $d_i$.
    \item For each remaining connected component record the set of ends.
    \item $\mathscr{L}_i$ is the set containing the sets from the previous step.
\end{enumerate}
Then the COF $\F_\C$ is 
$$ \coprod_{I\in \mathscr{L}_{r}} K_{I} \subset \coprod_{I\in \mathscr{L}_{r-1}} K_{I} \subset \cdots \subset \coprod_{I\in \mathscr{L}_{1}} K_{I}.$$
\end{proof}


\begin{exam}
Consider the radially aligned tropical curve as pictured in $\M_{0,8}^{\textrm{rad}}$ in Figure~\ref{fig:CombTypeOfMzero8rad}. The ordered partition of vertices is $L_1=\{v_1\},L_2=\{v_2,v_3\}$ and $L_3=\{v_4,v_5\}$ with respective distances $d_1=1,d_2=2,$ and $d_3=3$. Figure~\ref{fig:LevelDepictions} depicts the construction of $\mathscr{L}_1=\{ \{4,5,6,7\} , \{2,3,8\} \}$, $\mathscr{L}_2=\{ \{4,5,6,7\} , \{2,3\} \}$ and $\mathscr{L}_3=\{ \{4,5\}, \{6,7\} \}$. This gives us the COF of length 3, $\F$ 
$$ K_{\{4,5\}} \sqcup K_{\{6,7\}}  \subset K_{\{4,5,6,7\}} \sqcup K_{\{2,3\}} \subset K_{\{4,5,6,7\}} \sqcup K_{\{2,3,8\}}.$$

\begin{figure}[h]
    \centering
\begin{tikzpicture}
\fill (2,0) circle (0.10);
\node at (2,-0.3) {\cred $v_0$};
\draw[very thick] (2,0) -- (2,0.5);
\node at (2,0.7) {$1$};

\fill (3,0) circle (0.10);
\node at (3,-0.3) {\cred $v_1$};
\draw[very thick] (3,0) -- (3,0.5);
\node at (3,0.7) {$8$};

\fill (4,0) circle (0.10);
\node at (3.8,-0.3) {\cred $v_2$};
\draw[very thick] (4,0) -- (4.5,0.5);
\node at (4.7,0.5) {$2$};
\draw[very thick] (4,0) -- (4.5,-0.5);
\node at (4.7,-0.5) {$3$};

\fill (0,0) circle (0.10);
\node at (-0.4,0) {\cred $v_3$};

\fill (0,-1) circle (0.10);
\node at (-0.3,-0.8) {\cred $v_4$};
\draw[very thick] (0,-1) -- (-0.5,-1.5);
\node at (-0.7,-1.5) {$4$};
\draw[very thick] (0,-1) -- (0.5,-1.5);
\node at (0.7,-1.5) {$5$};

\fill (0,1) circle (0.10);
\node at (-0.3,0.8) {\cred $v_5$};
\draw[very thick] (0,1) -- (-0.5,1.5);
\node at (-0.7,1.5) {$6$};
\draw[very thick] (0,1) -- (0.5,1.5);
\node at (0.7,1.5) {$7$};

\draw[very thick] (0,0) -- (0,1);
\node at (0.2,0.5) {\cb $1$};
\draw[very thick] (0,0) -- (0,-1);
\node at (0.2,-0.5) {\cb $1$};
\draw[very thick] (0,0) -- (2,0);
\node at (1,0.3) {\cb $2$};
\draw[very thick] (2,0) -- (3,0);
\node at (2.5,0.3) {\cb $1$};
\draw[very thick] (3,0) -- (4,0);
\node at (3.5,0.3) {\cb $1$};
\end{tikzpicture}\\
    \caption{Tropical curve in $\M_{0,8}^{\textrm{rad}}$}
    \label{fig:CombTypeOfMzero8rad}
\end{figure}

\begin{figure}[h]
\begin{subfigure}{.32\textwidth}
    \centering
\begin{tikzpicture}[scale=0.9]
\fill[color = red] (2,0) circle (0.10);
\draw[very thick, color = red] (2,0) -- (2,0.5);
\node[color = red] at (2,0.7) {$1$};

\fill (3,0) circle (0.10);
\draw[very thick] (3,0) -- (3,0.5);
\node at (3,0.7) {$8$};

\fill (4,0) circle (0.10);
\draw[very thick] (4,0) -- (4.5,0.5);
\node at (4.7,0.5) {$2$};
\draw[very thick] (4,0) -- (4.5,-0.5);
\node at (4.7,-0.5) {$3$};

\fill (0,0) circle (0.10);

\fill (0,-1) circle (0.10);
\draw[very thick] (0,-1) -- (-0.5,-1.5);
\node at (-0.7,-1.5) {$4$};
\draw[very thick] (0,-1) -- (0.5,-1.5);
\node at (0.7,-1.5) {$5$};

\fill (0,1) circle (0.10);
\draw[very thick] (0,1) -- (-0.5,1.5);
\node at (-0.7,1.5) {$6$};
\draw[very thick] (0,1) -- (0.5,1.5);
\node at (0.7,1.5) {$7$};

\draw[very thick] (0,0) -- (0,1);
\draw[very thick] (0,0) -- (0,-1);
\draw[very thick, color = red] (0.1,0) -- (2,0);
\draw[very thick, color = red] (2,0) -- (2.9,0);
\draw[very thick] (3,0) -- (4,0);
\end{tikzpicture}
    \caption{Level 1 designated by the dual graph in black.}
    \label{fig:Level1}
\end{subfigure}\hfill
\begin{subfigure}{.32\textwidth}
    \centering
\begin{tikzpicture}[scale=0.9]
\fill[color = red] (2,0) circle (0.10);
\draw[very thick, color = red] (2,0) -- (2,0.5);
\node[color = red] at (2,0.7) {$1$};

\fill[color = red] (3,0) circle (0.10);
\draw[very thick, color = red] (3,0) -- (3,0.5);
\node[color = red] at (3,0.7) {$8$};

\fill (4,0) circle (0.10);
\draw[very thick] (4,0) -- (4.5,0.5);
\node at (4.7,0.5) {$2$};
\draw[very thick] (4,0) -- (4.5,-0.5);
\node at (4.7,-0.5) {$3$};

\fill (0,0) circle (0.10);

\fill (0,-1) circle (0.10);
\draw[very thick] (0,-1) -- (-0.5,-1.5);
\node at (-0.7,-1.5) {$4$};
\draw[very thick] (0,-1) -- (0.5,-1.5);
\node at (0.7,-1.5) {$5$};

\fill (0,1) circle (0.10);
\draw[very thick] (0,1) -- (-0.5,1.5);
\node at (-0.7,1.5) {$6$};
\draw[very thick] (0,1) -- (0.5,1.5);
\node at (0.7,1.5) {$7$};

\draw[very thick] (0,0) -- (0,1);
\draw[very thick] (0,0) -- (0,-1);
\draw[very thick, color = red] (0.1,0) -- (2,0);
\draw[very thick, color = red] (2,0) -- (3,0);
\draw[very thick, color = red] (3,0) -- (3.9,0);
\end{tikzpicture}
    \caption{Level 2 designated by the dual graph in black.}
    \label{fig:Level2}
\end{subfigure}\hfill
\begin{subfigure}{.32\textwidth}
    \centering
\begin{tikzpicture}[scale=0.9]
\fill[color = red] (2,0) circle (0.10);
\draw[very thick, color = red] (2,0) -- (2,0.5);
\node[color = red] at (2,0.7) {$1$};

\fill[color = red] (3,0) circle (0.10);
\draw[very thick, color = red] (3,0) -- (3,0.5);
\node[color = red] at (3,0.7) {$8$};

\fill[color = red] (4,0) circle (0.10);
\draw[very thick, color = red] (4,0) -- (4.5,0.5);
\node[color = red] at (4.7,0.5) {$2$};
\draw[very thick, color = red] (4,0) -- (4.5,-0.5);
\node[color = red] at (4.7,-0.5) {$3$};

\fill[color = red] (0,0) circle (0.10);

\fill (0,-1) circle (0.10);
\draw[very thick] (0,-1) -- (-0.5,-1.5);
\node at (-0.7,-1.5) {$4$};
\draw[very thick] (0,-1) -- (0.5,-1.5);
\node at (0.7,-1.5) {$5$};

\fill (0,1) circle (0.10);
\draw[very thick] (0,1) -- (-0.5,1.5);
\node at (-0.7,1.5) {$6$};
\draw[very thick] (0,1) -- (0.5,1.5);
\node at (0.7,1.5) {$7$};

\draw[very thick, color = red] (0,0) -- (0,0.9);
\draw[very thick, color = red] (0,0) -- (0,-0.9);
\draw[very thick, color = red] (0,0) -- (2,0);
\draw[very thick, color = red] (2,0) -- (3,0);
\draw[very thick, color = red] (3,0) -- (4,0);
\end{tikzpicture}
    \caption{Level 3 designated by the dual graph in black.}
    \label{fig:Level3}
\end{subfigure}
\caption{}
\label{fig:LevelDepictions}
\end{figure}

Now let's begin with the above COF of length $r=3$, $\F$. The index sets are listed below. Note that $k_1=k_2=k_3=2$.

$$
\begin{array}{lllll}
    I_{1}^{1} = \{4,5\}  &&I_{1}^{2} = \{4,5,6,7\}  &&I_{1}^{3} = \{4,5,6,7\} \\
    I_{2}^{1} = \{6,7\}  &&I_{2}^{2} = \{2,3\}  &&I_{2}^{3} = \{2,3,8\}
\end{array}
$$

In this example, the construction will take 5 steps where the final step is stabilizing the 2-valent vertices. See Figure~\ref{fig:ConstructionAllSteps} for a depiction of this construction. The first step is when we create the root vertex and attach the end $1$ which is given by $[8]\setminus \left(I_{3}^{1}\cup I_{3}^{2}\right)$. We also attach $k_3=2$ bounded edges of length 1.

In the second step we attach the end $8$, given by $I_{2}^{3}\setminus\phi_{2}^{-1}\left(I_{2}^{3}\right)$, to a vertex. Then since both $\phi_{2}^{-1}\left(I_{1}^{3}\right)$ and $\phi_{2}^{-1}\left(I_{2}^{3}\right)$ are nonempty we add a single bounded edge to both of the vertices from the first step.

For the third step we will start with the branch that has the end $8$. To the new vertex we attach the ends $2,3$ and no bounded edges. On the other branch we attach no ends and two bounded edges. There are two bounded edges because $\phi_1(I_{1}^1)\subseteq I_{1}^{2}$ and $\phi_1(I_{2}^1)\subseteq I_{1}^{2}$. 

The fourth step has us attaching the final ends. We attach the ends $4,5$ to one of the new vertices from step 3 and attach $6,7$ to the other vertex. Finally we omit the only 2-valent vertex.

\begin{figure}[h]
    \centering
\begin{tikzpicture}[]
\begin{scope}[scale=1]

\fill[color = blue] (2,0) circle (0.10);
\node at (2,-0.3) {\cred $v_0$};
\draw[very thick, color = blue] (2,0) -- (2,0.5);
\node[color = blue] at (2,0.7) {$1$};

\fill[color = blue] (3,0) circle (0.10);

\fill[color = blue] (1,0) circle (0.10);

\draw[very thick, color = blue] (1,0) -- (2,0);
\draw[very thick, color = blue] (2,0) -- (3,0);


\begin{scope}[shift={(5,0)}]
\fill[] (2,0) circle (0.10);
\draw[very thick,] (2,0) -- (2,0.5);
\node[] at (2,0.7) {$1$};

\fill[] (3,0) circle (0.10);
\draw[very thick, color = blue] (3,0.1) -- (3,0.5);
\node[color = blue] at (3,0.7) {$8$};

\fill[] (1,0) circle (0.10);

\fill[color = blue] (4,0) circle (0.10);

\fill[color = blue] (0,0) circle (0.10);

\draw[very thick, color = blue] (0,0) -- (0.9,0);
\draw[very thick] (1,0) -- (2,0);
\draw[very thick] (2,0) -- (3,0);
\draw[very thick, color = blue] (3.1,0) -- (3.9,0);
\end{scope}


\begin{scope}[shift={(11,0)}]
\fill[] (2,0) circle (0.10);
\draw[very thick,] (2,0) -- (2,0.5);
\node[] at (2,0.7) {$1$};

\fill[] (3,0) circle (0.10);
\draw[very thick] (3,0) -- (3,0.5);
\node[] at (3,0.7) {$8$};

\fill[] (1,0) circle (0.10);

\fill[] (4,0) circle (0.10);
\draw[very thick, color = blue] (4.071,0.071) -- (4.5,0.5);
\node[color = blue] at (4.7,0.5) {$2$};
\draw[very thick, color = blue] (4.071,-0.071) -- (4.5,-0.5);
\node[color = blue] at (4.7,-0.5) {$3$};

\fill[] (0,0) circle (0.10);

\fill[color = blue] (0,-1) circle (0.10);

\fill[color = blue] (0,1) circle (0.10);

\draw[very thick, color = blue] (0,0.1) -- (0,0.9);
\draw[very thick, color = blue] (0,-0.1) -- (0,-0.9);
\draw[very thick] (0,0) -- (1,0);
\draw[very thick] (1,0) -- (2,0);
\draw[very thick] (2,0) -- (3,0);
\draw[very thick] (3,0) -- (4,0);
\end{scope}


\begin{scope}[shift={(2,-4)}]
\fill[] (2,0) circle (0.10);
\draw[very thick,] (2,0) -- (2,0.5);
\node[] at (2,0.7) {$1$};

\fill[] (3,0) circle (0.10);
\draw[very thick] (3,0) -- (3,0.5);
\node[] at (3,0.7) {$8$};

\fill[] (1,0) circle (0.10);

\fill[] (4,0) circle (0.10);
\draw[very thick] (4,0) -- (4.5,0.5);
\node[] at (4.7,0.5) {$2$};
\draw[very thick] (4,0) -- (4.5,-0.5);
\node[] at (4.7,-0.5) {$3$};

\fill[] (0,0) circle (0.10);

\fill[] (0,-1) circle (0.10);
\draw[very thick, color = blue] (-0.071,-1.071) -- (-0.5,-1.5);
\node[color = blue] at (-0.7,-1.5) {$4$};
\draw[very thick, color = blue] (0.071,-1.071) -- (0.5,-1.5);
\node[color = blue] at (0.7,-1.5) {$5$};

\fill[] (0,1) circle (0.10);
\draw[very thick, color = blue] (-0.071,1.071) -- (-0.5,1.5);
\node[color = blue] at (-0.7,1.5) {$6$};
\draw[very thick, color = blue] (0.071,1.071) -- (0.5,1.5);
\node[color = blue] at (0.7,1.5) {$7$};

\draw[very thick] (0,0) -- (0,1);
\draw[very thick] (0,0) -- (0,-1);
\draw[very thick] (0,0) -- (1,0);
\draw[very thick] (1,0) -- (2,0);
\draw[very thick] (2,0) -- (3,0);
\draw[very thick] (3,0) -- (4,0);
\end{scope}


\begin{scope}[shift = {(10,-4)}]
\fill (2,0) circle (0.10);
\draw[very thick] (2,0) -- (2,0.5);
\node at (2,0.7) {$1$};

\fill (3,0) circle (0.10);
\draw[very thick] (3,0) -- (3,0.5);
\node at (3,0.7) {$8$};

\fill (4,0) circle (0.10);
\draw[very thick] (4,0) -- (4.5,0.5);
\node at (4.7,0.5) {$2$};
\draw[very thick] (4,0) -- (4.5,-0.5);
\node at (4.7,-0.5) {$3$};

\fill (0,0) circle (0.10);

\fill (0,-1) circle (0.10);
\draw[very thick] (0,-1) -- (-0.5,-1.5);
\node at (-0.7,-1.5) {$4$};
\draw[very thick] (0,-1) -- (0.5,-1.5);
\node at (0.7,-1.5) {$5$};

\fill (0,1) circle (0.10);
\draw[very thick] (0,1) -- (-0.5,1.5);
\node at (-0.7,1.5) {$6$};
\draw[very thick] (0,1) -- (0.5,1.5);
\node at (0.7,1.5) {$7$};

\draw[very thick] (0,0) -- (0,1);
\node at (0.2,0.5) {\cb $1$};
\draw[very thick] (0,0) -- (0,-1);
\node at (0.2,-0.5) {\cb $1$};
\draw[very thick] (0,0) -- (2,0);
\node at (1,0.3) {\cb $2$};
\draw[very thick] (2,0) -- (3,0);
\node at (2.5,0.3) {\cb $1$};
\draw[very thick] (3,0) -- (4,0);
\node at (3.5,0.3) {\cb $1$};
\end{scope}

\end{scope}
\end{tikzpicture}\\
    \caption{Construction of a radially aligned tropical curve from a chain-of-flats}
    \label{fig:ConstructionAllSteps}
\end{figure}
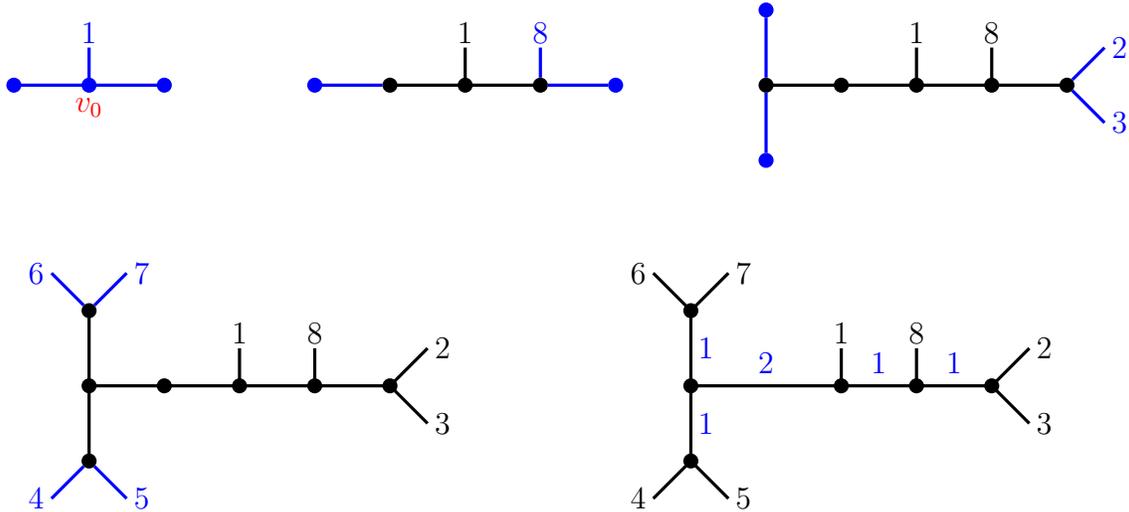

\end{exam}
 
\begin{rem}\label{rem:MzeronradIsEqualToBKprimeAsFans}
Using the bijection described in the proof of Lemma~\ref{lem:MzeronradIsEqualToBKprimeAsConeComplexes} we get an isomorphism of vector spaces, denoted using the same letter,
\begin{center}
    \begin{tikzcd}[row sep=1cm, column sep=1cm]
\Mzeronrad \arrow[d, hook] \arrow[r,"\Psi"', "="] \drar[phantom, "\square"]& \BKprime \arrow[d, hook] \\
Q_n \arrow[r,"\Psi"', "\cong"] & \RR^{|E(K_{n-1})|}/L 
\end{tikzcd}
\end{center}
which respects the cone complex structures of $\Mzeronrad$ and $\BKprime$. Therefore we may write $\Mzeronrad=\BKprime$ as polyhedral fans.
\end{rem}

\begin{exam}\label{exam:Mzero5tropEqualsBK4prime}
It is known that the cone complex of $\M_{0,5}^{\textrm{trop}}$ is given by the cone over the Petersen graph. We see in the Bergman fan $B'(K_4)$ that 3 edges are subdivided, see Figure~\ref{fig:BergmanFanOfK4}. Label the lattice of flats of $K_4$ in the following way:
\begin{align*}
    &\textrm{Rank } 1:~ F_1=\{e_{23}\},~F_2=\{e_{24}\},~F_3=\{e_{34}\},~F_4=\{e_{35}\},~F_5=\{e_{45}\},~F_6=\{e_{25}\}\\
    &\textrm{Rank } 2 \textrm{ connected}:~ F_7=\{e_{23},e_{24},e_{34}\},~ F_8=\{e_{23},e_{35},e_{25}\},~ F_9=\{e_{34},e_{35},e_{45}\},\\
    & \hspace{105pt}F_{10}=\{e_{24},e_{45},e_{25}\}\\
    &\textrm{Rank } 2 \textrm{ disconnected}:~ F_{11}=\{e_{23},e_{45}\},~ F_{12}=\{e_{24},e_{35}\},~ F_{13}=\{e_{25},e_{34}\}.
\end{align*}
See Figure~\ref{fig:FlatsOfK4} for a visual representation of the flats $F_1$, $F_7$, and $F_{11}$. Consider the top-dimensional cone $\sigma$ in $\M_{0,5}^{\textrm{trop}}$ with the combinatorial type that has a root vertex $\V_0$ with two bounded edges and adjacent vertices $\V_1$ and $\V_2$ with ends marked by $I_1=\{2,3\}$ and $I_2=\{4,5\}$. An abstract tropical curve $\C$ with this combinatorial type has edge lengths $\ell_{1},\ell_{2}\in\RR^+$, see Figure~\ref{fig:CombTypeOfMzero5trop}.

In $B'(K_4)$, and therefore $\M_{0,5}^{\textrm{rad}}$, we see that this cone is subdivided into $\sigma_1=\textrm{cone}(\rho_{F_{1}},\rho_{F_{11}})$ and $\sigma_2=\textrm{cone}(\rho_{F_{11}},\rho_{F_{5}})$ with their intersection being a ray $\rho=\rho_{F_{11}}$. It turns out that $\rho$ corresponds to $\C$ where $\ell_{1}=\ell_{2}$ and $\sigma_i$ is the cone corresponding to the abstract tropical curve $\C$ where $\ell_{i}>\ell_{j}$. \\

\begin{figure}[h]
\begin{subfigure}{.48\textwidth}
    \centering
\begin{tikzpicture}[scale=0.75]
    \draw[thick] (18:4cm) -- (90:4cm) -- (162:4cm) -- (234:4cm) --
(306:4cm) -- cycle;
    \draw[thick] (18:2cm) -- (162:2cm) -- (306:2cm) -- (90:2cm) --
(234:2cm) -- cycle;

    \draw[fill=black] (90:3cm) circle (3pt) node[label=right:$F_{11}$] {};
    \draw[fill=black] (90:0.618cm) circle (3pt) node[label=below:$F_{13}$] {};
    \draw[fill=black] (270:3.236cm) circle (3pt) node[label=below:$F_{12}$] {};

    \foreach \x in {18,90,162,234,306}
{
    \draw[thick] (\x:2cm) -- (\x:4cm);
}
    \draw[fill=black] (18:2cm) circle (3pt) node[label=90:$F_{6}$] {};
    \draw[fill=black] (90:2cm) circle (3pt) node[label=180:$F_{5}$] {};
    \draw[fill=black] (162:2cm) circle (3pt) node[label=270:$F_{3}$] {};
    \draw[fill=black] (234:2cm) circle (3pt) node[label=360:$F_{10}$] {};
    \draw[fill=black] (306:2cm) circle (3pt) node[label=30:$F_{9}$] {};
    \draw[fill=black] (18:4cm) circle (3pt) node[label=18:$F_{8}$] {};
    \draw[fill=black] (90:4cm) circle (3pt) node[label=90:$F_{1}$] {};
    \draw[fill=black] (162:4cm) circle (3pt) node[label=162:$F_{7}$] {};
    \draw[fill=black] (234:4cm) circle (3pt) node[label=234:$F_{2}$] {};
    \draw[fill=black] (306:4cm) circle (3pt) node[label=306:$F_{4}$] {};
\end{tikzpicture}
    \caption{A slice of the Bergman fan $\B'(K_4)$ with rays labeled by their corresponding flats.}
    \label{fig:BergmanFanOfK4}
\end{subfigure}
\hfill
\begin{subfigure}{.48\textwidth}
    \centering
\begin{tikzpicture}[scale=1.3]
\fill (0,0) circle (0.10);
\node at (0.2,-0.3) {\cred $v_1$};
\draw[very thick] (0,0) -- (-0.5,0.5); 
\node at (-0.7,0.5) {$2$};
\draw[very thick] (0,0) -- (-0.5,-0.5); 
\node at (-0.7,-0.5) {$3$};

\draw[very thick] (0,0) -- (1.5,0);
\node at (0.75,0.3) {\cb $\ell_1$};

\fill (1.5,0) circle (0.10);
\node at (1.5,-0.3) {\cred $v_0$};
\draw[very thick] (1.5,0) -- (1.5,0.5);
\node at (1.5,0.7) {$1$};

\draw[very thick] (1.5,0) -- (3,0);
\node at (2.25,0.3) {\cb $\ell_2$};

\fill (3,0) circle (0.10);
\node at (2.8,-0.3) {\cred $v_2$};
\draw[very thick] (3,0) -- (3.5,0.5); 
\node at (3.7,0.5) {$4$};
\draw[very thick] (3,0) -- (3.5,-0.5); 
\node at (3.7,-0.5) {$5$};
\end{tikzpicture}
    \caption{A tropical curve of $\M_{0,5}^{\textrm{trop}}$ with splits $I_1=\{2,3\}$ and $I_2=\{4,5\}$.}
    \label{fig:CombTypeOfMzero5trop}
\end{subfigure}
\caption{}
\end{figure}

\begin{figure}[h]
\begin{subfigure}{.32\textwidth}
    \centering
\begin{tikzpicture}[scale=1.8]
    \node (v2) at (1,1.732) [circle, draw = black, inner sep =  2pt, outer sep = 0.5pt, minimum size = 4mm, line width = 1pt] {$2$};
    \node (v3) at (2,0) [circle, draw = black, inner sep =  2pt, outer sep = 0.5pt, minimum size = 4mm, line width = 1pt] {$3$};
    \node (v4) at (0,0) [circle, draw = black, inner sep =  2pt, outer sep = 0.5pt, minimum size = 4mm, line width = 1pt] {$4$};
    \node (v5) at (1,0.577) [circle, draw = black, inner sep =  2pt, outer sep = 0.5pt, minimum size = 4mm, line width = 1pt] {$5$};
    
    \draw[line width = 1pt] (v2) -- node [midway, right] {$e_{23}$} (v3);
\end{tikzpicture}
    \caption{Flat $F_1$}
    \label{fig:FlatF1ofK4}
\end{subfigure}\hfill
\begin{subfigure}{.32\textwidth}
    \centering
\begin{tikzpicture}[scale=1.8]
    \node (v2) at (1,1.732) [circle, draw = black, inner sep =  2pt, outer sep = 0.5pt, minimum size = 4mm, line width = 1pt] {$2$};
    \node (v3) at (2,0) [circle, draw = black, inner sep =  2pt, outer sep = 0.5pt, minimum size = 4mm, line width = 1pt] {$3$};
    \node (v4) at (0,0) [circle, draw = black, inner sep =  2pt, outer sep = 0.5pt, minimum size = 4mm, line width = 1pt] {$4$};
    \node (v5) at (1,0.577) [circle, draw = black, inner sep =  2pt, outer sep = 0.5pt, minimum size = 4mm, line width = 1pt] {$5$};
    
    \draw[line width = 1pt] (v2) -- node [midway, right] {$e_{23}$} (v3);
    \draw[line width = 1pt] (v2) -- node [midway, left] {$e_{24}$} (v4);
    \draw[line width = 1pt] (v3) -- node [midway, below] {$e_{34}$} (v4);
\end{tikzpicture}
    \caption{Flat $F_7$}
    \label{fig:FlatF7ofK4}
\end{subfigure}\hfill
\begin{subfigure}{.32\textwidth}
    \centering
\begin{tikzpicture}[scale=1.8]
    \node (v2) at (1,1.732) [circle, draw = black, inner sep =  2pt, outer sep = 0.5pt, minimum size = 4mm, line width = 1pt] {$2$};
    \node (v3) at (2,0) [circle, draw = black, inner sep =  2pt, outer sep = 0.5pt, minimum size = 4mm, line width = 1pt] {$3$};
    \node (v4) at (0,0) [circle, draw = black, inner sep =  2pt, outer sep = 0.5pt, minimum size = 4mm, line width = 1pt] {$4$};
    \node (v5) at (1,0.577) [circle, draw = black, inner sep =  2pt, outer sep = 0.5pt, minimum size = 4mm, line width = 1pt] {$5$};
    
    \draw[line width = 1pt] (v2) -- node [midway, right] {$e_{23}$} (v3);
    \draw[line width = 1pt] (v4) -- node [midway, below right] {$e_{45}$} (v5);
\end{tikzpicture}    
    \caption{Flat $F_{11}$}
    \label{fig:FlatF11ofK4}
\end{subfigure}
\caption{Some flats of $K_4$}
\label{fig:FlatsOfK4}
\end{figure}
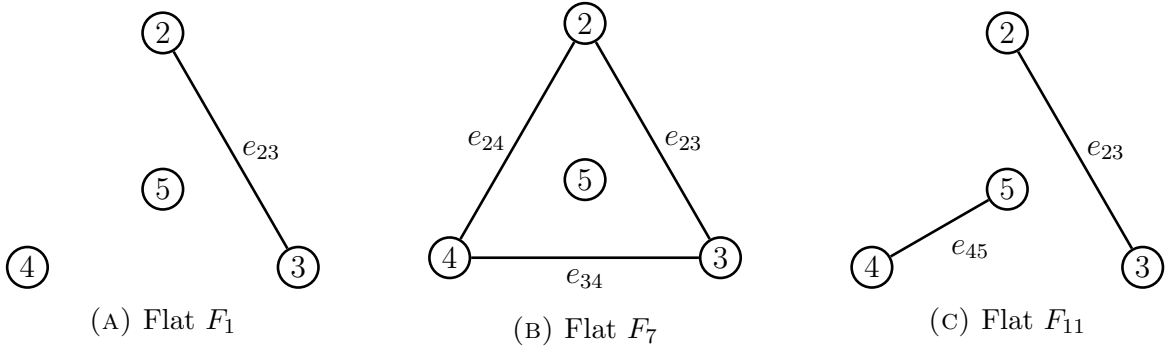



\end{exam}

\subsection{Moduli spaces of rational graphically stable tropical curves as Bergman fans}

\subsubsection{The image of $\Mzeronrad$ equals $\BGammaprime$}

As a generalization of \cite{cavalieri2016moduli} we define the space of graphically stable tropical curves and investigate its ability to be embedded as a balanced fan. In particular, we explore the relationship between $\MzeroGtrop$, $\MzeroGrad$, and $\BGammaprime$. Let $\Gamma$ be a simple connected graph whose nodes are in bijection with ends $2,\ldots,n$ of $\C$. 


\begin{defn}\label{def:GammaStability}
Let $\V$ be a non-root vertex of $\C\in\Mzerontrop$ with $d$ bounded edges and ends given by $I\subset [n].$
We say that $\C$ is $\Gamma$-\emph{stable at} $\V$ if
\begin{itemize}
    \item $d>2$;
    \item $d=2$, then $|I|\neq0$; or
    \item $d=1$, then there exists $e_{ij}\in E(\Gamma)$ for $i,j\in I$.
\end{itemize}
The root vertex, $\V_0$, is $\Gamma$-stable if it has at least 2 bounded edges or at least 1 end attached to $\V_0$. We say that $\C$ is $\Gamma$-\emph{stable} if $\C$ is $\Gamma$-stable at all $\V$.

We define $\MzeroGtrop$ to be the parameter space of all rational $n$-marked $\Gamma$-stable abstract tropical curves. Similarly, we define $\MzeroGrad$ to be the parameter space of rational $n$-marked $\Gamma$-stable radially aligned abstract tropical curves.
\end{defn}


As before, both of these spaces are well-defined as cone complexes but not necessarily as balanced fans. 

\begin{defn}
We define the \emph{reduction morphism}
\begin{align}\label{eqn:ReductionMorphism}
    c_\Gamma : \Mzerontrop\longrightarrow\MzeroGtrop
\end{align}
which successively contracts bounded edges adjacent to $\Gamma$-unstable vertices.
\end{defn}

\begin{exam}\label{exam:GraphicAndHassett}
Let $\Gamma$ be a path of length 2. Then $\MzeroGtrop$ is exactly the tropical moduli space of weighted stable tropical curves $\M_{0,\A}^{\textrm{trop}}$ with weight data $\A=(1,1,1/2,1/2)$. The fan associated to this moduli space sits in $\RR$ and contains a node at the origin and two rays pointing in opposite directions.
\end{exam}

The previous example showed that the set of moduli spaces of weighted stable tropical curves and the set of moduli spaces of $\Gamma$ stable tropical curves have an intersection. The next two examples show that neither is contained in the other.

\begin{exam}\label{exam:GraphicNotHassett}
Let $\Gamma$ be a complete bipartite graph on 4 vertices. Consider the labeling as in Figure~\ref{fig:LabeledK4} with the edges $e_{25}$ and $e_{34}$ removed. In this case, $\MzeroGtrop$ is not isomorphic to a tropical moduli space with weighted points.
\end{exam}

\begin{exam}\label{exam:HassettNotGraphic}
Consider the weight data $\A=(1,1,1/2,1/2,1/2)$. A likely choice of a corresponding graph would be the graph $\Gamma$ obtained by removing the set of edges $\{e_{34},e_{35},e_{45}\}$ from $K_4$. However, that graph corresponds to the weight data $\A'=(1,1,\varepsilon,\varepsilon,\varepsilon)$. We can see the difference by looking at the combinatorial type with split $I=\{3,4,5\}$. It is $\A$-stable but not $\Gamma$-stable nor $\A'$-stable.
\end{exam}

To relate the theory of Bergman fans to these new graphically stable moduli spaces we need to understand what $\Gamma$-stability means in terms of COFs of $K_{n-1}$.

\begin{defn}\label{def:FlatualGammaStability}
A flat $F$ of $K_{n-1}$ is \emph{$\Gamma$-stable} if the combinatorial type $\C_F$ is $\Gamma$-stable.
\end{defn}

Recall that a flat of $\Gamma$ can be thought of as a flat of $K_{n-1}$ restricted to the edge set of $\Gamma$, i.e. a flat of $\Gamma$ is $F\cap\Gamma$ where $F$ is a flat of $K_{n-1}$. Consider a ray $\rho$ with splits $I_1,\ldots,I_d$ and it's corresponding cluster graph $F_\rho=\coprod_{j=1}^d K_{I_{j}}$. Then $\rho$ is $\Gamma$-unstable if and only if there exists $I_j$ such that $K_{I_{j}}\cap\Gamma$ has no edges.

\begin{rem}\label{rem:GammaStabilityDeletionOfCliques}
In this matroidal notion, $\Gamma$-stability can be thought of as deletion of cliques of $K_{n-1}$.
\end{rem}

Now consider the map
\begin{align}\label{eqn:projgOnBergmanFanVectorSpaces}
    \projg:\RR^{|E(K_{n-1})|}/L\longrightarrow\RR^{|E(K_{n-1})|}/L/S
\end{align}
where $L$ is the lineality space spanned by the vector $(1,1,\ldots,1)$ and $S=\textrm{span}\{v_e|e\not\in\Gamma\}$ is the span of basis vectors corresponding to edges not in $\Gamma$. Note that $\projg$ is the natural projection map that forgets the coordinates corresponding to edges that are not in $\Gamma$. \\ 

Simultaneously, we may define 
\begin{align}\label{eqn:projgtildeOnModuliVectorSpaces}
    \projgtilde:Q_n\longrightarrow Q_n/U 
\end{align}

where $U$ is the linear span of $\Gamma$-unstable rays of $\Mzerontrop$. As described in Remark~\ref{rem:BasisOfRaysEqualsDivisorCombTypes}, we associate to a basis of $Q_n$ a set of combinatorial types of curves with splits $I$ of size 2. A split of size 2 corresponds to an edge of $\Gamma$. Thus $U$ is generated by combinatorial types corresponding to the edges removed from $\Gamma$.

\begin{prop}\label{prop:projgOfMzeronradIsEqualToprojgOfBKprimeAsFans}
The fans $\projg(\BKprime)$ and $\projgtilde(\Mzeronrad)$ and $\projgtilde(\Mzerontrop)$ have the same support. Furthermore $\projg(\BKprime)=\projgtilde(\Mzeronrad)$ as fans.
\end{prop}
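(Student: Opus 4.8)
The plan is to reduce all three assertions to a single linear-algebra identification: the isomorphism $\Psi\colon Q_n\to\RR^{|E(K_{n-1})|}/L$ of Remark~\ref{rem:MzeronradIsEqualToBKprimeAsFans} carries the subspace $U$ onto the image of $S$. Once this is in hand, the commutativity of the projections with $\Psi$ is automatic and the proposition follows formally, both at the level of supports and at the level of fans.

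First I would compute $\Psi$ on the generators of $U$. By Remark~\ref{rem:BasisOfRaysEqualsDivisorCombTypes} together with \eqref{eqn:AnyRayIsASumOfDivisorRays}, every $\Gamma$-unstable ray $\rho_I$ of $\Mzerontrop$ decomposes in $Q_n$ as $\sum_{\{i,j\}\subseteq I}\rho_{\{i,j\}}$, while $\Gamma$-instability of $\rho_I$ means exactly that $K_I\cap\Gamma$ has no edge, i.e. $e_{ij}\notin E(\Gamma)$ for every pair $\{i,j\}\subseteq I$. Hence $U$ is spanned by the size-$2$ split rays $\rho_{\{i,j\}}$ with $e_{ij}\notin E(\Gamma)$. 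Under the bijection $\Psi$ of Lemma~\ref{lem:MzeronradIsEqualToBKprimeAsConeComplexes}, such a ray corresponds to the rank-$1$ flat $K_{\{i,j\}}=\{e_{ij}\}$, whose Bergman ray is $\rho_{\{e_{ij}\}}=-v_{e_{ij}}$. Therefore $\Psi(U)=\textrm{span}\{v_e\mid e\notin\Gamma\}$, which is precisely the image of $S$ in $\RR^{|E(K_{n-1})|}/L$.

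Because $\Psi$ is a linear isomorphism sending $U$ onto the image of $S$, it descends to an isomorphism $\bar\Psi\colon Q_n/U\xrightarrow{\ \cong\ }\RR^{|E(K_{n-1})|}/L/S$ fitting into a commutative square with $\projgtilde$ and $\projg$, namely $\bar\Psi\circ\projgtilde=\projg\circ\Psi$. Since $\Psi$ identifies $\Mzeronrad$ with $\BKprime$ as fans by Remark~\ref{rem:MzeronradIsEqualToBKprimeAsFans}, applying the two projections through this square gives $\bar\Psi\bigl(\projgtilde(\Mzeronrad)\bigr)=\projg\bigl(\Psi(\Mzeronrad)\bigr)=\projg(\BKprime)$, and since $\Psi$ matches cones with cones the equality holds for each image cone. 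Under the identification $\bar\Psi$ this is the desired equality $\projg(\BKprime)=\projgtilde(\Mzeronrad)$ of fans, and in particular their supports agree.

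Finally, for the remaining support claim I would use that the radially aligned subdivision only refines $\Mzerontrop$ without altering its support, so $|\Mzeronrad|=|\Mzerontrop|$ in $Q_n$; applying the surjection $\projgtilde$ then yields $|\projgtilde(\Mzeronrad)|=|\projgtilde(\Mzerontrop)|$, which completes the triple support equality. The one point that demands genuine care — and the main obstacle — is the identification $\Psi(U)=S$: it rests both on confirming that $U$, defined as the span of all $\Gamma$-unstable rays, is truly generated by the size-$2$ removed-edge rays (this is exactly where \eqref{eqn:AnyRayIsASumOfDivisorRays} and the characterization of $\Gamma$-instability via $K_I\cap\Gamma$ are needed) and on tracking the explicit form of $\Psi$ on these rays through Lemma~\ref{lem:MzeronradIsEqualToBKprimeAsConeComplexes}.
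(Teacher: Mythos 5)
Your proposal is correct and takes essentially the same approach as the paper: the paper's proof simply asserts that the square in Figure~\ref{fig:PropositionDiagram} commutes (with $U$ identified, in the discussion preceding the proposition, as the span of the size-$2$ rays corresponding to edges removed from $\Gamma$) and that $\Psi$ respects the fan structures upstairs, so the isomorphism descends. Your argument fills in exactly those asserted details --- using Equation~\eqref{eqn:AnyRayIsASumOfDivisorRays} to show $U$ is spanned by the removed-edge rays, computing $\Psi(U)=S$ on generators, and handling the $\projgtilde(\Mzerontrop)$ support claim via the refinement fact --- without changing the underlying strategy.
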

\begin{proof}
It is clear from the discussion leading up to this proposition that the diagram in Figure~\ref{fig:PropositionDiagram} is commutative and since $\Psi$ respects the fan structures upstairs, we obtain an isomorphism downstairs that also respects the fan structures.
\end{proof}

\begin{figure}[h]
    \centering
\begin{tikzcd}[row sep=1cm, column sep=1cm]
Q_n \arrow{d}{\projgtilde} \arrow[r,"\Psi"', "\cong"] \drar[phantom, "\square"] & \RR^{|E(K_{n-1})|}/L \arrow{d}{\projg} \\
Q_n/U \arrow{r}{\cong} & \RR^{|E(K_{n-1})|}/L/S 
\end{tikzcd}
    \caption{Diagram for Proposition~\ref{prop:projgOfMzeronradIsEqualToprojgOfBKprimeAsFans}}
    \label{fig:PropositionDiagram}
\end{figure}

In general, the cone complex structures of $\projg(\BKprime)$, $\BGammaprime$, and $\MzeroGrad$ do not all coincide. 
First we investigate $\projg$ and the relationship between $\projg(\BKprime)$ and $\BGammaprime$. From this point on we will use $\projg$ to refer to both projection maps above.\\

\begin{lem}\label{lem:ProjectionDoesNotContractAllTopDimCones}
The map $\projg$ doesn't contract all top-dimensional cones of $\Mzeronrad$.
\end{lem}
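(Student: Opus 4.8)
The plan is to exhibit a single top-dimensional cone of $\Mzeronrad=\BKprime$ whose image under $\projg$ still has dimension $n-3$; since $\projg$ is linear, a cone is contracted precisely when its image drops dimension, so producing one such cone suffices. I will work on the Bergman fan side, using the bijection $\Psi$ of Lemma~\ref{lem:MzeronradIsEqualToBKprimeAsConeComplexes} and the commutative square of Proposition~\ref{prop:projgOfMzeronradIsEqualToprojgOfBKprimeAsFans}. The key observation is that $\projg(\rho_F)=-\sum_{e\in F\cap\Gamma}v_e$, so the image of a ray $\rho_F$ depends only on the restricted flat $F\cap\Gamma$; by Equation~\eqref{eqn:FlatsOfSubgraphAreRestrictionsOfFlatsOfGraph} this restriction is exactly the flat $\cl_\Gamma(F)$ of the cycle matroid of $\Gamma$. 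Thus $\projg$ turns a chain of flats of $K_{n-1}$ into the corresponding chain of restricted flats of $\Gamma$, and the cone is not contracted exactly when this restricted chain stays strictly increasing of full length.

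Next I would build the chain. Since $\Gamma$ is connected on $n-1$ vertices and has rank $n-2$, equal to the rank of $K_{n-1}$, any spanning tree $T=\{e_1,\dots,e_{n-2}\}$ of $\Gamma$ is simultaneously a base of both $M(\Gamma)$ and $M(K_{n-1})$. Set
$$F_i=\cl_{K_{n-1}}(\{e_1,\dots,e_i\}),\qquad i=1,\dots,n-3.$$
Because $e_1,\dots,e_i$ are independent, $\rk(F_i)=i$, so $F_1\subsetneq\cdots\subsetneq F_{n-3}$ is a maximal chain of flats of $K_{n-1}$ and hence spans a top-dimensional cone $C_\F$ of $\BKprime$ of dimension $n-3$.

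Then I would restrict to $\Gamma$. By Equation~\eqref{eqn:FlatsOfSubgraphAreRestrictionsOfFlatsOfGraph}, $F_i\cap\Gamma=\cl_\Gamma(\{e_1,\dots,e_i\})=:G_i$, and since the $e_i$ remain independent in $M(\Gamma)$ we again have $\rk(G_i)=i$; hence $G_1\subsetneq\cdots\subsetneq G_{n-3}$ is a maximal chain of flats of $\Gamma$. Under $\projg$ the ray $\rho_{F_i}$ maps to $\rho_{G_i}$, and because the $G_i$ form a maximal flag in the rank-$(n-2)$ matroid $\Gamma$, the vectors $\rho_{G_1},\dots,\rho_{G_{n-3}}$ are linearly independent and span a top-dimensional cone of $\BGammaprime$. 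Therefore $\dim\projg(C_\F)=n-3=\dim C_\F$, so $C_\F$ is not contracted.

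The only point requiring care — and the only place an obstruction could arise — is guaranteeing that the restricted flats $F_i\cap\Gamma$ remain strictly nested of the correct ranks; this is exactly what fails for arbitrary chains (consecutive flats can restrict to the same flat of $\Gamma$, which is why $\projg$ does contract many cones). Anchoring the chain to a spanning tree of $\Gamma$ forces each rank to increase in lockstep in both matroids, via the rank-preservation of closure together with the restriction identity~\eqref{eqn:FlatsOfSubgraphAreRestrictionsOfFlatsOfGraph}, which resolves the issue.
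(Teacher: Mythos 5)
Your proof is correct and is essentially the paper's argument: both anchor a maximal chain of flats of $K_{n-1}$ to a spanning tree $T$ of $\Gamma$ and use the restriction identity~\eqref{eqn:FlatsOfSubgraphAreRestrictionsOfFlatsOfGraph} together with rank preservation (the tree edges stay independent in $M(\Gamma)$) to conclude the restricted chain keeps length $n-3$. The only cosmetic difference is that you take closures $\cl_{K_{n-1}}(\{e_1,\ldots,e_i\})$ of arbitrary initial segments of tree edges, allowing disconnected flats, whereas the paper grows connected cliques $K_{I_1}\subset\cdots\subset K_{I_{n-3}}$ vertex-by-vertex and identifies the resulting combinatorial type as a caterpillar tree.
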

\begin{proof}
Since $\BKprime$ has the same as the image of $\Mzeronrad$ we need only construct a COF $\F$ of $K_{n-1}$ of length $n-3$ that remains length $n-3$ when restricting the edge set to $\Gamma$. Then $\F$ corresponds to a radially aligned combinatorial type $\C_\F$ which is a trivalent tree such that when you shrink all but one bounded edge the resulting combinatorial type is $\Gamma$-stable.\\

\noindent \emph{Construction:}\\
Note that $\Gamma$ is connected and let $T$ be a spanning tree of $\Gamma$.
Fix adjacent vertices $i$ and $j$ in $T$. Denote $I_1=\{i,j\}$ and define vertex sets recursively $I_k=I_{k-1}\bigcup\{i_{k-1}\}$ where $\{i_1,\ldots,i_{n-4}\}=[n]\setminus\{1,i,j\}$ such that $T_{I_k}=T\cap K_{I_k}$ is connected. Then $\F$ is our desired COF of $K_{n-1}$
\begin{align}
    \F:& ~ K_{I_1}\subset \cdots \subset K_{I_{n-3}}.
\end{align}
This can be seen by the following argument:

Recall that the rank of a flat is the number of non-isolated vertices minus the number of connected components. For each flat in $\F$, the rank of $K_{I_k}$ is $k+1-1=k$. By construction $K_{I_k}\cap\Gamma=T_{I_k}$ and so the rank of each flat is also $k$. \\

\noindent The combinatorial type of this COF is the \emph{caterpillar tree} (see Figure~\ref{fig:Caterpillar Tree}) where one side has ends $i$ and $j$ and the other side has ends $1$ and $i_{n-3}$. This caterpillar tree corresponds to the top-dimensional cone $$\sigma=\textrm{cone}\left(\rho_{I_1}, \rho_{I_2},\ldots, \rho_{I_{n-3}}\right)\in\Mzerontrop$$
Here, $\sigma$ is an example of a cone of $\Mzerontrop$ that doesn't get subdivided when refined to the radially aligned subdivision. Therefore $\projg$ doesn't contract all top-dimensional cones of $\Mzerontrop$ and $\Mzeronrad$.
\end{proof}

\begin{figure}[h]
    \centering
\begin{tikzpicture}
\fill (0,0) circle (0.10);
\draw[very thick] (0,0) -- (-0.5,0.5); 
\node at (-0.8,0.5) {$j$};
\draw[very thick] (0,0) -- (-0.5,-0.5); 
\node at (-0.8,-0.5) {$i$};
\draw[very thick] (0,0) -- (1.2,0);

\fill (1.2,0) circle (0.10);
\draw[very thick] (1.2,0) -- (1.2,0.5); 
\node at (1.2,0.7) {$i_{1}$};
\draw[very thick] (1.2,0) -- (2.4,0);

\fill (2.4,0) circle (0.10);
\draw[very thick] (2.4,0) -- (2.4,0.5); 
\node at (2.4,0.7) {$i_{2}$};
\draw[very thick] (2.4,0) -- (3.2,0);

\fill (3.4,0) circle (0.05); 
\fill (3.6,0) circle (0.05); 
\fill (3.8,0) circle (0.05);

\draw[very thick] (4,0) -- (4.8,0);
\fill (4.8,0) circle (0.10);
\draw[very thick] (4.8,0) -- (4.8,0.5); 
\node at (4.8,0.7) {$i_{n-5}$};
\draw[very thick] (4.8,0) -- (6,0);

\fill (6,0) circle (0.10);
\draw[very thick] (6,0) -- (6,0.5); 
\node at (6,0.7) {$i_{n-4}$};
\draw[very thick] (6,0) -- (7.2,0);

\fill (7.2,0) circle (0.10);
\draw[very thick] (7.2,0) -- (7.7,0.5); 
\node at (8.2,0.5) {$i_{n-3}$};
\draw[very thick] (7.2,0) -- (7.7,-0.5); 
\node at (8,-0.5) {$1$};
\end{tikzpicture}
    \caption{A Caterpillar Tree}
    \label{fig:Caterpillar Tree}
\end{figure}
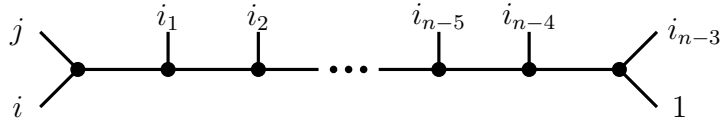


Note that the support of $\BGammaprime$ is a subset of $\RR^{|E(\Gamma)|}/L$ where $L$ is the lineality space spanned by the all ones vector. It is a computation to see that the dimension of $\RR^{|E(K_{n-1})|}/L/S$ is the same as the dimension of $\RR^{|E(\Gamma)|}/L$. There is a natural isomorphism between these two spaces given by underlying matroidal structure. In other words, the standard basis vectors of each space are given by edges in their respective graphs and the vectors in $S$ correspond to precisely the edges not in $\Gamma$. With this in mind we reach the first theorem of this paper.\\

\begin{thm}\label{thm:BergmanFanEqualsProjection}
Let $\Gamma$ be a connected graph on $n-1$ vertices. Then $\projgrad = \BGammaprime$. 
\end{thm}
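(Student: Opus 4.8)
The plan is to leverage the description of both $\BKprime$ and $\BGammaprime$ as chains-of-flats complexes, together with the relation \eqref{eqn:FlatsOfSubgraphAreRestrictionsOfFlatsOfGraph} between the closure operators of $K_{n-1}$ and $\Gamma$. First I would record how $\projg$ acts on rays: since $\rho_F=-\sum_{e\in F}v_e$ and $\projg$ forgets exactly the coordinates indexed by edges $e\notin\Gamma$, under the identification of $\RR^{|E(K_{n-1})|}/L/S$ with $\RR^{|E(\Gamma)|}/L$ we get $\projg(\rho_F)=\rho_{F\cap\Gamma}$. By \eqref{eqn:FlatsOfSubgraphAreRestrictionsOfFlatsOfGraph}, whenever $F$ is a flat of $K_{n-1}$ the set $F\cap\Gamma=\cl_\Gamma(F\cap\Gamma)$ is a flat of $\Gamma$, so $\projg$ sends rays of $\BKprime$ to rays of $\BGammaprime$. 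It therefore suffices to match the two fans cone by cone.

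For the inclusion $\projg(\BKprime)\subseteq\BGammaprime$, I would take a chain of flats $\F:F_1\subsetneq\cdots\subsetneq F_r$ of $K_{n-1}$ and intersect each term with $\Gamma$, obtaining a nested sequence $F_1\cap\Gamma\subseteq\cdots\subseteq F_r\cap\Gamma$ of flats of $\Gamma$. Deleting repetitions yields an honest chain of flats of $\Gamma$, and because $\projg$ is linear and carries each generator $\rho_{F_i}$ to $\rho_{F_i\cap\Gamma}$, the image $\projg(C_\F)$ is exactly the corresponding cone of $\BGammaprime$. (Several cones may drop in dimension here, which is expected and harmless.)

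For the reverse inclusion I would construct a section on chains. Given a chain of flats $\mathscr{G}:G_1\subsetneq\cdots\subsetneq G_s$ of $\Gamma$, lift it to $\cl_{K_{n-1}}(G_1)\subsetneq\cdots\subsetneq\cl_{K_{n-1}}(G_s)$ in $K_{n-1}$. Each term is a flat of $K_{n-1}$ with $\cl_{K_{n-1}}(G_i)\cap\Gamma=\cl_\Gamma(G_i)=G_i$ by \eqref{eqn:FlatsOfSubgraphAreRestrictionsOfFlatsOfGraph}, so applying $\projg$ returns $\mathscr{G}$ and hence $C_{\mathscr{G}}\subseteq\projg(\BKprime)$. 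The point needing care is strictness of the lift: since $\rk$ is unambiguous and the closure operator preserves rank, $\rk(\cl_{K_{n-1}}(G_i))=\rk(G_i)$, and as the $G_i$ are distinct nested flats of $\Gamma$ their ranks strictly increase; thus the lifted flats are strictly nested and the lift is a genuine chain of flats of $K_{n-1}$.

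Combining the two inclusions, the cones of $\projg(\BKprime)$ are precisely the cones $C_{\mathscr{G}}$ indexed by chains of flats of $\Gamma$, i.e. the chains-of-flats subdivision of $\BGammaprime$; since the faces of $C_\F$ are the subchains of $\F$ and projection sends subchains to subchains, the identification respects the face relation, and we conclude $\projg(\BKprime)=\BGammaprime$ as fans. I expect the main obstacle to be the reverse inclusion, specifically verifying that the closure lift of a $\Gamma$-chain remains strictly increasing; this is exactly where rank-preservation of the closure operator and the relation \eqref{eqn:FlatsOfSubgraphAreRestrictionsOfFlatsOfGraph} are indispensable, while the rest reduces to bookkeeping about the linearity of $\projg$.
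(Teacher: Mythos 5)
Your proof is correct, but it takes a genuinely different route from the paper. The paper argues by dimension and irreducibility: Lemma~\ref{lem:ProjectionDoesNotContractAllTopDimCones} uses a spanning tree of $\Gamma$ to build a caterpillar chain of flats whose top-dimensional cone is not contracted by $\projg$, so the image is a balanced fan of full dimension $n-3$ sitting inside the irreducible Bergman fan $\BGammaprime$, which forces surjectivity; no attempt is made to track individual cones. You instead match the chains-of-flats subdivisions cone by cone: restriction $F\mapsto F\cap\Gamma$ (valid by \eqref{eqn:FlatsOfSubgraphAreRestrictionsOfFlatsOfGraph}) gives $\projg(\BKprime)\subseteq\BGammaprime$, and the closure lift $G_i\mapsto\clK(G_i)$, with rank-preservation of closure guaranteeing strictness of the lifted chain, gives the reverse inclusion. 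Each approach buys something: the paper's is short but leans on tropical machinery (push-forwards of balanced fans and irreducibility of Bergman fans, the latter invoked without proof) and yields only surjectivity onto the support, whereas yours is elementary, purely matroid-theoretic, and proves the stronger statement that the image fan is exactly the chains-of-flats subdivision of $\BGammaprime$; moreover your lifting construction is essentially the same device the paper deploys later to prove surjectivity of $\Psi_\Gamma$, so your argument anticipates that lemma. Two small bookkeeping points you should make explicit: in the forward inclusion a term $F_i\cap\Gamma$ may be empty (drop such terms along with repetitions; $\rho_{\emptyset}=0$, so nothing is lost), and connectivity of $\Gamma$ is what guarantees $\rk(E(\Gamma))=n-2$, so that proper flats of $\Gamma$ lift to proper flats of $K_{n-1}$ and proper flats of $K_{n-1}$ cannot restrict to all of $E(\Gamma)$ --- this is exactly where the hypothesis of the theorem enters your argument.
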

\begin{proof}
Since $\Gamma$ is a connected graph the dimension of $\BGammaprime$ is $\textrm{rank}(\Gamma)-1$ where $\textrm{rank}(\Gamma)$ is the number of vertices minus 1. In total, we obtain $\dim\left(\BGammaprime\right)=n-3=\dim\left(\Mzeronrad\right)$. 
By Lemma~\ref{lem:ProjectionDoesNotContractAllTopDimCones} the dimension of $\projg(\Mzeronrad)$ is also $n-3$. 
Since $\BGammaprime$ is irreducible and has the same dimension, $\projg$ is surjective.
\end{proof}

\subsubsection{$\BGammaprime$ equals $\MzeroGrad$ for $\Gamma$ complete $k$-partite}

As mentioned earlier, both $\MzeroGtrop$ and $\MzeroGrad$ are well-defined as cone complexes but may not be able to be embedded into $Q_n/U$. Geometrically the issue is that these fans may contain cones which are adjacent to only 1 maximal cell, and thus cannot be balanced. We proceed by taking a combinatorial approach to the problem.

To investigate the relationship between $\MzeroGrad$ and $\BGammaprime$ consider the locus of $\Mzeronrad$ of $\Gamma$-stable curves given by the section $\iota$, which is the natural inclusion map. We define $\Psi_\Gamma:=\projg\circ(\Psi\circ \iota)$ as a map of cone complexes in the following diagram.

\begin{center}
\begin{tikzcd}[row sep=2cm, column sep=2cm,]
\Mzeronrad \arrow[d, bend right=30, "c_\Gamma"'] \arrow[r, "\Psi"', "="] & \BKprime \arrow{d}{\projg} \\
\MzeroGrad \arrow[u, bend right=30, "\iota"'] \arrow[r, "\Psi_\Gamma"', dashed] & \BGammaprime
\end{tikzcd}
\end{center}

The map $\Psi_\Gamma$ induces a map (denoted by the same name) between $\Gamma$-stable radially aligned combinatorial types and COFs of $\Gamma$.
Note that $\Psi\circ\iota$ induces a bijection between the set $\Gamma$-stable radially aligned combinatorial types and $\Gamma$-stable flats of $K_{n-1}$.
Hence statements about $\Psi_\Gamma$ are equivalent to statements about $\projg$ restricted to $\Gamma$-stable flats. 

By showing $\Psi_\Gamma$ is a bijective map between the set of $\Gamma$-stable radially aligned combinatorial types and the set of COFs of $\Gamma$, we obtain an induced bijection of the fans $\MzeroGrad$ and $\BGammaprime$. The next lemma shows that surjectivity of this map is not hard come by and follows from the fact that flats of $\Gamma$ are flats of $K_{n-1}$ restricted to the edge set of $\Gamma$

\begin{lem}
The map $\Psi_\Gamma$ is surjective. 
\end{lem}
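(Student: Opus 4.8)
The plan is to show that every chain of flats of $\Gamma$ arises as $\Psi_\Gamma$ applied to some $\Gamma$-stable radially aligned combinatorial type. The key structural fact I will lean on is Equation~\eqref{eqn:FlatsOfSubgraphAreRestrictionsOfFlatsOfGraph}, namely $\cl_\Gamma(A) = \cl_{K_{n-1}}(A)\cap\Gamma$: every flat of $\Gamma$ is the restriction to $E(\Gamma)$ of a flat of $K_{n-1}$. Recall also from the discussion preceding the lemma that $\projg$ is exactly the forgetful projection deleting coordinates of edges not in $\Gamma$, and that $\Psi\circ\iota$ gives a bijection between $\Gamma$-stable radially aligned combinatorial types and $\Gamma$-stable flats of $K_{n-1}$. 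So surjectivity of $\Psi_\Gamma$ onto COFs of $\Gamma$ reduces to showing that $\projg$, restricted to $\Gamma$-stable flats of $K_{n-1}$, hits every flat of $\Gamma$.

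First I would take an arbitrary flat $G$ of $\Gamma$ and produce a $\Gamma$-stable flat $F$ of $K_{n-1}$ with $F\cap\Gamma = G$. By Equation~\eqref{eqn:FlatsOfSubgraphAreRestrictionsOfFlatsOfGraph} there is some flat $F'$ of $K_{n-1}$ with $F'\cap\Gamma = G$; the natural choice is $F' = \cl_{K_{n-1}}(G)$, the cluster graph obtained by completing each connected component of $G$. The point to verify is that this $F'$ is $\Gamma$-stable in the sense of Definition~\ref{def:FlatualGammaStability}, i.e. that the associated combinatorial type $\C_{F'}$ is $\Gamma$-stable at every vertex. Writing $F' = \coprod_j K_{I_j}$, the recipe before Remark~\ref{rem:GammaStabilityDeletionOfCliques} says $\C_{F'}$ is $\Gamma$-unstable precisely when some clique $K_{I_j}$ has $K_{I_j}\cap\Gamma$ edgeless. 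But each $I_j$ is the vertex set of a connected component of $G$, and $G$ itself is a subgraph of $\Gamma$; so if $|I_j|>1$ that component already contributes an edge of $\Gamma$ inside $K_{I_j}$, and if $|I_j|=1$ the corresponding vertex contributes an end rather than a bounded edge and causes no instability. Hence $\C_{F'}$ is $\Gamma$-stable.

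Having found a $\Gamma$-stable flat $F'$ with $\projg(\rho_{F'}) = \rho_G$, I would extend this from single flats to full chains of flats. Given a COF $G_1\subsetneq\cdots\subsetneq G_s$ of $\Gamma$, I apply the construction $G_i\mapsto \cl_{K_{n-1}}(G_i)$ componentwise; since closure is order-preserving (axiom~\ref{ax:S2}), the resulting flats $F_i = \cl_{K_{n-1}}(G_i)$ form a nested chain, and $F_i\cap\Gamma = G_i$ by Equation~\eqref{eqn:FlatsOfSubgraphAreRestrictionsOfFlatsOfGraph}. After discarding any repetitions where $F_i = F_{i+1}$ (which only shortens the chain and does not affect the image, since $\projg$ already identifies those), this yields a $\Gamma$-stable COF of $K_{n-1}$ mapping onto the given COF of $\Gamma$ under $\projg$. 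Via the bijection $\Psi\circ\iota$ this COF corresponds to a $\Gamma$-stable radially aligned combinatorial type whose image under $\Psi_\Gamma$ is the chosen COF of $\Gamma$, establishing surjectivity.

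The step I expect to be the main obstacle is the verification that $\cl_{K_{n-1}}(G_i)$ is genuinely $\Gamma$-stable at every vertex and that strict inclusions of flats of $\Gamma$ pull back to (weakly) nested, stability-preserving flats of $K_{n-1}$ without collapsing; the stability check must correctly handle singleton components and the distinction between bounded edges and ends in Definition~\ref{def:GammaStability}, which is where a naive argument could break down.
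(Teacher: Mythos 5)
Your proposal is correct and is essentially the paper's own argument: the paper takes a COF $F_1\subset\cdots\subset F_r$ of $\Gamma$ and constructs the COF $\coprod_{j}K_{I_j^1}\subset\cdots\subset\coprod_{j}K_{I_j^r}$ of $K_{n-1}$ by completing each connected component, which is exactly your $G_i\mapsto\cl_{K_{n-1}}(G_i)$ via Equation~\eqref{eqn:FlatsOfSubgraphAreRestrictionsOfFlatsOfGraph}. Your additional checks ($\Gamma$-stability of the lifted flats, handling of singleton components, strictness of inclusions --- which in fact is automatic, since $F_i\cap\Gamma=G_i\neq G_{i+1}=F_{i+1}\cap\Gamma$ rules out $F_i=F_{i+1}$, so no discarding is ever needed) only make explicit what the paper leaves implicit.
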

\begin{proof}
Consider the COF $\F$ of $\Gamma$ given by $F_1\subset\cdots\subset F_r$ where $F_i$ has $k_i$ connected components. Write the vertex set of each connected component of $F_i$ as $I_j^i$. Construct the COF $\G$ of $K_{n-1}$ as 
$$\G:\coprod_{j=1}^{k_1}K_{I_j^1}\subset\cdots\subset\coprod_{j=1}^{k_r}K_{I_j^r}.$$
Then we have $\projg(\G)=\F$ and thus $\Psi_\Gamma$ is surjective.
\end{proof}

The interesting part of the map $\Psi_\Gamma$ is that it is not always injective. The obstruction is highlighted in the following example.

\begin{exam}\label{exam:ObstructionOfFansOnK4Minus2AdjacentEdges}
Let $\Gamma$ be the subgraph of $K_4$ with edges $e_{35}$ and $e_{45}$ removed, see Figure~\ref{fig:GammaForExampleOfprojgFailingInjectivity}. In $\Mzerontrop$, there are now 8 combinatorial types with 1 bounded edge and 9 combinatorial types with 2 bounded edges that are $\Gamma$-stable. This means that $\M_{0,\Gamma}^{\textrm{trop}}$, as a cone complex, has 8 rays and 9 $2$-dimensional cones and $\M_{0,\Gamma}^{\textrm{rad}}$ has 9 rays and 10 $2$-dimensional cones as described in Example~\ref{exam:Mzero5tropEqualsBK4prime}. 

\begin{figure}[h]
    \centering
\begin{tikzpicture}[scale=2]
    \node (v2) at (1,1.732) [circle, draw = black, inner sep =  2pt, outer sep = 0.5pt, minimum size = 4mm, line width = 1pt] {$2$};
    \node (v3) at (2,0) [circle, draw = black, inner sep =  2pt, outer sep = 0.5pt, minimum size = 4mm, line width = 1pt] {$3$};
    \node (v4) at (0,0) [circle, draw = black, inner sep =  2pt, outer sep = 0.5pt, minimum size = 4mm, line width = 1pt] {$4$};
    \node (v5) at (1,0.577) [circle, draw = black, inner sep =  2pt, outer sep = 0.5pt, minimum size = 4mm, line width = 1pt] {$5$};
    
    \draw[line width = 1pt] (v2) -- node [midway, right] {$e_{23}$} (v3);
    \draw[line width = 1pt] (v2) -- node [midway, left] {$e_{24}$} (v4);
    \draw[line width = 1pt] (v2) -- node [midway, below left] {$e_{25}$} (v5);
    \draw[line width = 1pt] (v3) -- node [midway, below] {$e_{34}$} (v4);
\end{tikzpicture}
    \caption{The graph $\Gamma$ in Example~\ref{exam:ObstructionOfFansOnK4Minus2AdjacentEdges}}
    \label{fig:GammaForExampleOfprojgFailingInjectivity}
\end{figure}
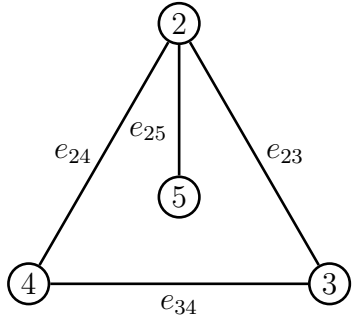

It is important to note that as cone complexes $\BGammaprime$ is not equal to $\M_{0,\Gamma}^{\textrm{trop}}$ nor $\M_{0,\Gamma}^{\textrm{rad}}$, see Figures~\ref{fig:MzeroGammaRadFanObstruction} and \ref{fig:BergmanFanOfGammaObstruction}. The obstruction lies in the ray $\rho=\rho_{\{3,4,5\}}$ and the cone $\sigma=\textrm{cone}(\rho_{\{3,4,5\}}, \rho_{\{3,4\}})$. Let $\C_\rho$ and $\C_\sigma$ be their corresponding combinatorial types. 
Geometrically, $\rho$ is adjacent to only 1 $\Gamma$-stable maximal cell. Meaning there is no way to embed $\rho$ and $\sigma$ into a vector space as a balanced fan.

Write the lattice of flats of $\Gamma$ with the same labels as in Example~\ref{exam:Mzero5tropEqualsBK4prime}
\begin{align*}
    &\textrm{Rank } 1:~ F_1=\{e_{23}\},~ F_2=\{e_{24}\},~ F_3=\{e_{34}\},~ F_6=\{e_{25}\}\\
    &\textrm{Rank } 2 \textrm{ connected}:~ F_7=\{e_{23},e_{24},e_{34}\},~ F_8=\{e_{23},e_{25}\},~ F_{10}=\{e_{24},e_{25}\}\\
    &\textrm{Rank } 2 \textrm{ disconnected}:~ F_{13}=\{e_{25},e_{34}\}
\end{align*}
The flat corresponding to $\rho_{\{3,4,5\}}$ in $K_4$, i.e. $(\Psi\circ\iota)(\C_\rho)$, is $K_{\{3,4,5\}}=F_9$. 
When restricting the edge set, $$K_{\{3,4,5\}}\cap\Gamma=K_{\{3,4\}}=F_3.$$ 
Similarly, $(\Psi\circ\iota)(\C_\sigma)$ is the COF $K_{\{3,4\}}\subset K_{\{3,4,5\}}$ and this COF reduces to the single flat $K_{\{3,4\}}$ when restricting the edge set. 
That is to say there are 3 combinatorial types of $\MzeroGrad$ whose cones all coincide in $\BGammaprime$, namely $$\Psi_\Gamma(\C_\rho)=\Psi_\Gamma(\C_\sigma)=\Psi_\Gamma(\C_{\rho_{\{3,4\}}})=\rho_{F_3}.$$
The map $\Psi_\Gamma$ goes from the cone complex depicted in Figure~\ref{fig:MzeroGammaRadFanObstruction} to the one in Figure~\ref{fig:BergmanFanOfGammaObstruction}. 
Here we can clearly see the obstruction in $\M_{0,\Gamma}^\textrm{rad}$ and how it is collapsed in $\B'(\Gamma)$.

\end{exam}

\begin{figure}[h]
\begin{subfigure}{.48\textwidth}
    \centering
\begin{tikzpicture}[scale=0.8]
    \draw[thick] (18:4cm) -- (90:4cm) -- (162:4cm) -- (234:4cm);
    \draw[thick] (234:2cm) -- (18:2cm) -- (162:2cm) -- (306:2cm);

    \draw[fill=black] (90:0.618cm) circle (3pt) node[label=below:$F_{13}$]  {};

    \foreach \x in {18,162,234}
{
    \draw[thick] (\x:2cm) -- (\x:4cm);
}
    \draw[fill=black] (18:2cm) circle (3pt) node[label=90:$F_{6}$] {};
    \draw[fill=black] (162:2cm) circle (3pt) node[label=270:$F_{3}$] {};
    \draw[fill=black] (234:2cm) circle (3pt) node[label=360:$F_{10}$] {};
    \draw[fill=black] (306:2cm) circle (3pt) node[label=30:$F_{9}$] {};
    \draw[fill=black] (18:4cm) circle (3pt) node[label=18:$F_{8}$] {};
    \draw[fill=black] (90:4cm) circle (3pt) node[label=90:$F_{1}$] {};
    \draw[fill=black] (162:4cm) circle (3pt) node[label=162:$F_{7}$] {};
    \draw[fill=black] (234:4cm) circle (3pt) node[label=234:$F_{2}$] {};
\end{tikzpicture}
    \caption{A slice of the cone complex of $\M_{0,\Gamma}^\textrm{rad}$ with rays labeled by their corresponding flats. When embedded, some angles may be flat.}
    \label{fig:MzeroGammaRadFanObstruction}
\end{subfigure}
\hfill
\begin{subfigure}{.48\textwidth}
    \centering
\begin{tikzpicture}[scale=0.8]
    \draw[thick] (18:4cm) -- (90:4cm) -- (162:4cm) -- (234:4cm);
    \draw[thick] (234:2cm) -- (18:2cm) -- (162:2cm);

    \draw[fill=black] (90:0.618cm) circle (3pt) node[label=below:$F_{13}$] {};

    \foreach \x in {18,162,234}
{
    \draw[thick] (\x:2cm) -- (\x:4cm);
}
    \draw[fill=black] (18:2cm) circle (3pt) node[label=90:$F_{6}$] {};
    \draw[fill=black] (162:2cm) circle (3pt) node[label=270:$F_{3}$] {};
    \draw[fill=black] (234:2cm) circle (3pt) node[label=360:$F_{10}$] {};
    \draw[fill=black] (18:4cm) circle (3pt) node[label=18:$F_{8}$] {};
    \draw[fill=black] (90:4cm) circle (3pt) node[label=90:$F_{1}$] {};
    \draw[fill=black] (162:4cm) circle (3pt) node[label=162:$F_{7}$] {};
    \draw[fill=black] (234:4cm) circle (3pt) node[label=234:$F_{2}$] {};
\end{tikzpicture}
    \caption{A slice of the cone complex of $\B'(\Gamma)$ with rays labeled by their corresponding flats. When embedded, some angles may be flat.}
    \label{fig:BergmanFanOfGammaObstruction}
\end{subfigure}
    \caption{}
\end{figure}

\begin{rem}\label{rem:WeMustRemoveTheCliques!(KeyObservation)}
We saw in Example~\ref{exam:ObstructionOfFansOnK4Minus2AdjacentEdges} that $\projg$ (restricting the edge set) was not injective on cones corresponding to the flats which dropped in rank. 
In particular, the obstruction was a $K_3$ subgraph which had 2 of its 3 edges deleted. 
So we can only allow a graph $\Gamma$ if it has the property that if you delete 2 edges of a $K_3$, then the third edge must also be deleted. 
This is equivalent to the third characterization in Lemma~\ref{lem:AlternateDescriptionsOfCompleteMultipartiteGraphs}.



\end{rem}

\begin{prop}\label{prop:CriterionForInjectivityOfprojg}
The map $\projg$ is injective on cones corresponding to $\Gamma$-stable flats if and only if for any $\Gamma$-stable flat $F$, $\rk(F)=\rk(\projg(F))$.
\end{prop}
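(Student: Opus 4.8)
The plan is to analyze $\projg$ through its effect on a single flat. On the ray $\rho_F=-\sum_{e\in F}v_e$ the map forgets the coordinates outside $\Gamma$, so $\projg(\rho_F)=\rho_{F\cap\Gamma}$ and $\projg$ carries the flat $F$ of $K_{n-1}$ to the flat $F\cap\Gamma$ of $\Gamma$ (Equation~\eqref{eqn:FlatsOfSubgraphAreRestrictionsOfFlatsOfGraph}). Since $F\cap\Gamma\subseteq F$, axiom~\ref{ax:R2'} gives $\rk(F\cap\Gamma)\le\rk(F)$, so the stated condition is exactly the assertion that no $\Gamma$-stable flat drops rank under restriction. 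I would first record the key equivalence, which drives both directions: by Lemma~\ref{lem:RankOfASubgraphIFFCommonSpanningForest}, $\rk(F)=\rk(F\cap\Gamma)$ holds if and only if $F$ and $F\cap\Gamma$ share a common spanning forest $T\subseteq\Gamma$; as $T$ is then a spanning forest of $F$, its closure recovers $F$, so that $F=\cl(F\cap\Gamma)$.

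For the ``if'' direction, assume every $\Gamma$-stable flat preserves rank. Then each such $F$ equals $\cl(F\cap\Gamma)$, hence is determined by its image $\projg(F)$, so $F\mapsto F\cap\Gamma$ is injective on $\Gamma$-stable flats. To promote this to injectivity on cones I would use that rank strictly increases along a chain of flats; since rank is preserved under restriction, the projected chain $F_1\cap\Gamma\subsetneq\cdots\subsetneq F_r\cap\Gamma$ stays strict and of the same length $r$. Thus $\projg$ preserves $\dim C_\F$ (so it is injective on the span of each cone) and separates distinct chains, because two $\Gamma$-stable chains with the same image have termwise-equal restrictions and hence termwise-equal flats.

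For the ``only if'' direction I would argue by contrapositive: suppose some $\Gamma$-stable flat $F$ has $\rk(F)>\rk(F\cap\Gamma)$, and set $F'=\cl(F\cap\Gamma)$. Then $F'\subsetneq F$ is strict, since $\rk(F')=\rk(F\cap\Gamma)<\rk(F)$, while $F'\cap\Gamma=\cl_\Gamma(F\cap\Gamma)=F\cap\Gamma$ by Equation~\eqref{eqn:FlatsOfSubgraphAreRestrictionsOfFlatsOfGraph}. Hence $\projg(\rho_{F'})=\rho_{F\cap\Gamma}=\projg(\rho_F)$, and this common image is a genuine nonzero ray: $\Gamma$-stability forces $F\cap\Gamma\neq\emptyset$, while a rank drop forces $F\cap\Gamma\neq E(\Gamma)$ (else $\rk(F)\le\rk(\Gamma)=\rk(F\cap\Gamma)$). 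The two distinct rays $\rho_F,\rho_{F'}$ therefore collapse, breaking injectivity.

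The hard part will be verifying that $F'$ actually lies in the $\Gamma$-stable subfan, since otherwise $\rho_{F'}$ is not a cone we are entitled to use. Here I would invoke that $\cl(F\cap\Gamma)$ completes each connected component of $F\cap\Gamma$, so the components of $F'$ are precisely the vertex sets of the connected components of $F\cap\Gamma$; any such component with at least two vertices is connected and thus contains an edge of $F\cap\Gamma\subseteq\Gamma$, which is exactly the condition for the corresponding vertex of $\C_{F'}$ to be $\Gamma$-stable. This component-by-component stability check, together with the matroid fact that comparable flats of equal rank coincide (used to conclude $\cl(T)=F$ when recovering $F$ from $F\cap\Gamma$), are the two points where I would take the most care.
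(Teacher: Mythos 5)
Your proposal is correct and takes essentially the same route as the paper's proof: both directions turn on Lemma~\ref{lem:RankOfASubgraphIFFCommonSpanningForest} and the flat $F'=\clK(F\cap\Gamma)$ (the paper's $\clK(T)$ for $T$ a spanning forest of $F\cap\Gamma$), with a rank drop making $\rho_F$ and $\rho_{F'}$ collapse, and rank preservation forcing $F=\clK(F\cap\Gamma)$ so that each $\Gamma$-stable flat is recovered from its image. Your two extra verifications---that $F'$ is itself $\Gamma$-stable (so the colliding ray really lies in the $\Gamma$-stable locus) and that flat-level injectivity promotes to injectivity on cones via strict chains---are refinements of points the paper's proof leaves implicit, not a different argument.
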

\begin{proof}
First let's assume that $\projg$ is injective. That is, for any two distinct $\Gamma$-stable flats, their images under $\projg$ are distinct. Let $F$ be a $\Gamma$-stable flat of $K_{n-1}$. Let $T$ be a spanning forest of $\projg(F)=F\cap\Gamma$. By way of contradiction,  suppose that $\rk(F\cap\Gamma)<\rk(F)$. Consider $\clK(T)$ as a flat of $K_{n-1}$. Then we have
$$\projg(F)=F\cap\Gamma=\clG(T):=\clK(T)\cap\Gamma=\projg(\clK(T)).$$
But since $\rk(\clK(T))=\rk(T)<\rk(F)$, $\clK(T)\neq F$. This contradicts the injectivity of $\projg$ on $\Gamma$-stable flats.\\

Now we will prove the backwards direction. Suppose that for any $\Gamma$-stable flat $F$, then $\rk(F)=\rk(\projg(F))$. Let $F$ and $G$ be $\Gamma$-stable flats of $K_{n-1}$ with $\projg(F)=\projg(G)$. By our hypothesis, we can deduce that $\rk(F)=\rk(G)$. By Lemma~\ref{lem:RankOfASubgraphIFFCommonSpanningForest} $F$ and $G$ share a spanning forest, call it $T$. Then by definition, $\clK(T)=F$ and $\clK(T)=G$. This proves $\projg$ is injective, completing the proof.
\end{proof}

\begin{lem}\label{lem:RankDoesNotDropWhenGammaNice}
Suppose $C$ is a clique of $K_{n-1}$ and that $\Gamma$ is a complete multipartite graph labeled by the same $n-1$ vertices. Then $\rk(C)=\rk(\projg(C))$ or $\rk(\projg(C))=0$.
\end{lem}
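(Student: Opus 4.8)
The plan is to exploit the structure of $\projg(C) = C \cap \Gamma$: since $C = K_I$ is the clique on some vertex set $I$, restricting its edges to $\Gamma$ yields exactly the subgraph of $\Gamma$ induced on $I$, which is again complete multipartite. Everything then reduces to a dichotomy governed by whether $I$ lies inside a single independent part of $\Gamma$ or meets several of them. Write the independent sets of $\Gamma$ as $V_1,\ldots,V_k$ and set $m=|I|$; note that $\rk(C)=m-1$, since a clique on $m$ vertices is connected with no isolated vertices.

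First I would handle the case $I\subseteq V_j$ for a single part $V_j$. Since no two vertices lying in the same part are adjacent in $\Gamma$, the restriction $C\cap\Gamma$ has no edges, so every vertex of $I$ is isolated and $\rk(\projg(C))=0$, placing us in the second alternative of the statement.

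Next I would treat the case where $I$ meets at least two of the parts. Here I claim $C\cap\Gamma$ is connected with no isolated vertices. No vertex $v\in I\cap V_j$ is isolated: because $I$ contains some vertex outside $V_j$ and $v$ is adjacent in $\Gamma$ to every vertex outside its own part, $v$ has a neighbor in $I$. Connectivity follows by the same mechanism---any two vertices in distinct parts are adjacent, and any two in a common part share a neighbor in any other part---so $C\cap\Gamma$ is connected on all $m$ of its (non-isolated) vertices. By the rank formula, non-isolated vertices minus connected components, this gives $\rk(\projg(C))=m-1=\rk(C)$.

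Combining the two cases yields the stated dichotomy. I do not expect a serious obstacle: the only point requiring care is the bookkeeping in the rank formula, namely confirming that in the multipartite case no vertex of $I$ becomes isolated in the restriction (which would spuriously lower the count), and this is precisely what the hypothesis of meeting two parts guarantees. The argument is at heart the connectivity characterization of complete multipartite graphs, specialized to an induced subgraph.
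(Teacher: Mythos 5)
Your proof is correct and takes essentially the same route as the paper: both arguments reduce to showing that when $\projg(C)=C\cap\Gamma$ has at least one edge (equivalently, in your phrasing, when the vertex set of $C$ meets at least two parts of $\Gamma$), the restriction is connected with no isolated vertices, and then apply the rank formula counting non-isolated vertices minus components. The only cosmetic difference is that the paper fixes an edge $e_{ij}$ and uses characterization (2) of Lemma~\ref{lem:AlternateDescriptionsOfCompleteMultipartiteGraphs} to route a path through that edge, whereas you argue connectivity directly from the multipartite partition; the substance is identical.
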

\begin{proof}
Suppose that $\rk(\projg(C))\neq0$, i.e. $\projg(C)$ is not the empty graph. Fix an edge $e_{ij}$ between vertices $v_i$ and $v_j$. By Lemma~\ref{lem:AlternateDescriptionsOfCompleteMultipartiteGraphs} for any other vertex $v_k$ either $e_{ik}$ or $e_{jk}$ exists. So there is a path between any two vertices of $\projg(C)$ going through the edge $e_{ij}$. This means that $\projg(C)$ is connected and any spanning tree contains all $n-1$ vertices, proving the lemma.
\end{proof}

\begin{lem}\label{lem:projgInjectiveWhenGammaNice}
The map $\projg$ is injective on cones corresponding to $\Gamma$-stable flats if and only if $\Gamma$ is a complete multipartite graph.
\end{lem}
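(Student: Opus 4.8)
The plan is to deduce both implications from the rank criterion of Proposition~\ref{prop:CriterionForInjectivityOfprojg}, which says that $\projg$ is injective on cones corresponding to $\Gamma$-stable flats exactly when $\rk(F)=\rk(\projg(F))$ for every $\Gamma$-stable flat $F$. So the whole lemma reduces to controlling how rank behaves under restriction of the edge set, and the two supporting lemmas already do the heavy lifting.

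For the forward direction I would assume $\Gamma$ is complete multipartite and take an arbitrary $\Gamma$-stable flat, written as a cluster graph $F=\coprod_j K_{I_j}$. Because $F$ is $\Gamma$-stable, each clique component $K_{I_j}$ meets $\Gamma$ in at least one edge, so $\rk(\projg(K_{I_j}))\neq 0$; Lemma~\ref{lem:RankDoesNotDropWhenGammaNice} then forces $\rk(K_{I_j})=\rk(\projg(K_{I_j}))$ for each $j$. Since the vertex sets $I_j$ are disjoint, rank is additive over the components of both $F$ and $\projg(F)=\coprod_j(K_{I_j}\cap\Gamma)$, and summing the componentwise equalities gives $\rk(F)=\rk(\projg(F))$. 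Proposition~\ref{prop:CriterionForInjectivityOfprojg} then yields injectivity.

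For the converse I would argue by contrapositive: assume $\Gamma$ is not complete multipartite. By characterization~(3) of Lemma~\ref{lem:AlternateDescriptionsOfCompleteMultipartiteGraphs} there are three vertices $v_i,v_j,v_k$ whose induced subgraph has exactly one edge, say $e_{ij}\in E(\Gamma)$ and $e_{ik},e_{jk}\notin E(\Gamma)$. I would then exhibit the explicit flat $F=K_{\{i,j,k\}}$ of $K_{n-1}$ (with all remaining vertices isolated), checking that it is a genuine flat and that it is $\Gamma$-stable because the surviving edge $e_{ij}$ supplies the edge required by Definition~\ref{def:GammaStability}. Here $\rk(F)=2$ while $\projg(F)$ is the single edge $e_{ij}$ of rank $1$, so the rank criterion fails and $\projg$ is not injective.

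I expect the proof to be short once this machinery is assembled; the only genuine subtlety is in the forward direction, where one must be sure that restricting to $\Gamma$ does not fragment a $\Gamma$-stable clique into several lower-rank pieces. This is exactly the content of Lemma~\ref{lem:RankDoesNotDropWhenGammaNice}---complete multipartiteness guarantees that a clique meeting $\Gamma$ nontrivially restricts to a connected graph spanning all its vertices---so the main work is really pushing the componentwise rank bookkeeping through the disjoint-union structure of flats. The converse is the more mechanical half, requiring only the verification that the candidate $K_{\{i,j,k\}}$ is both a flat and $\Gamma$-stable.
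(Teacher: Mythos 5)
Your proposal is correct and takes essentially the same route as the paper: both directions run through the rank criterion of Proposition~\ref{prop:CriterionForInjectivityOfprojg}, with the complete multipartite hypothesis handled by applying Lemma~\ref{lem:RankDoesNotDropWhenGammaNice} componentwise to the clique decomposition of a $\Gamma$-stable flat, and the converse resting on the triangle flat $K_{\{v_i,v_j,v_k\}}$. The only (cosmetic) difference is that you argue the converse by contrapositive via characterization~(3) of Lemma~\ref{lem:AlternateDescriptionsOfCompleteMultipartiteGraphs}, whereas the paper argues directly via characterization~(2), and you explicitly verify $\Gamma$-stability of the triangle flat, which the paper leaves implicit.
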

\begin{proof}
First we will prove the backwards direction. Let $F$ be a $\Gamma$-stable flat of $K_{n-1}$. Note that $F$ is a disjoint union of cliques, $C_i$. By assumption, the image of each clique, under $\projg$, is not empty. Using Lemma~\ref{lem:RankDoesNotDropWhenGammaNice}, we have
$$\rk(F) = \sum_{i=1}^k \rk(C_i) = \sum_{i=1}^k \rk(C_i\cap\Gamma) = \rk(F\cap\Gamma) = \rk(\projg(F)).$$
By Proposition~\ref{prop:CriterionForInjectivityOfprojg}, $\projg$ is injective on cones corresponding to $\Gamma$-stable flats.\\

Now suppose that $\projg$ is injective on cones corresponding to $\Gamma$-stable flats. It is enough to prove the equivalent statement from Lemma~\ref{lem:AlternateDescriptionsOfCompleteMultipartiteGraphs}. Let $v_i$ and $v_j$ be vertices of $\Gamma$ such that $e_{ij}$ is an edge of $\Gamma$. Fix another vertex $v_k$. Consider the flat $F=K_{\{v_i,v_j,v_k\}}.$
We know that 
$$K_{\{v_i,v_j\}}\subset\projg(F).$$
Since $\rk(\projg(F))=\rk(F)=2$, either $e_{ik}$ or $e_{jk}$ must exist as edges in $\Gamma$.
\end{proof}

Geometrically, Lemma~\ref{lem:RankDoesNotDropWhenGammaNice} and Lemma~\ref{lem:projgInjectiveWhenGammaNice} show that when $\Gamma$ is a complete multipartite graph, the cone complex $\MzeroGrad$ will not contain a ray which is adjacent to only 1 maximal cell and thus can be embedded as a balanced fan.

\begin{thm}\label{thm:BergmanFanEqualsMzeroGammarad} 
The cone complex underlying $\MzeroGrad$ is naturally identified with $\projgrad = \BGammaprime$ if and only if $\Gamma$ is a complete multipartite graph. In particular, this complex has the structure of a balanced fan.
\end{thm}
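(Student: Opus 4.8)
The plan is to assemble the proof from the machinery already developed rather than argue directly. The key realization is that Theorem~\ref{thm:BergmanFanEqualsMzeroGammarad} is a bijection statement dressed as an identity of cone complexes, so the real work is to show that the map $\Psi_\Gamma:\MzeroGrad\to\BGammaprime$ is a bijection of cone complexes precisely when $\Gamma$ is complete multipartite. First I would recall the commutative diagram defining $\Psi_\Gamma=\projg\circ(\Psi\circ\iota)$ and note that $\Psi\circ\iota$ is already a bijection between $\Gamma$-stable radially aligned combinatorial types and $\Gamma$-stable flats of $K_{n-1}$ (established just before the surjectivity lemma). Hence $\Psi_\Gamma$ is a bijection of the underlying combinatorial data if and only if $\projg$, restricted to $\Gamma$-stable flats, is a bijection onto the COFs of $\Gamma$.

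Next I would dispatch each direction using the lemmas in hand. Surjectivity of $\Psi_\Gamma$ holds unconditionally by the preceding lemma (lift a COF of $\Gamma$ by completing each connected component to a clique in $K_{n-1}$). Injectivity is the conditional part: by Lemma~\ref{lem:projgInjectiveWhenGammaNice}, $\projg$ is injective on cones corresponding to $\Gamma$-stable flats if and only if $\Gamma$ is complete multipartite. Combining surjectivity with this equivalence gives that $\Psi_\Gamma$ is a bijection of cone complexes exactly when $\Gamma$ is complete multipartite, which is the ``if and only if'' of the theorem. For the final sentence about balancedness, I would invoke Theorem~\ref{thm:BergmanFanEqualsProjection}, which already identifies $\projgrad$ with $\BGammaprime$ as fans for any connected $\Gamma$; since $\BGammaprime$ is a Bergman fan it is balanced with all weights one, so the identification transports this balanced fan structure onto $\MzeroGrad$.

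The one genuinely delicate point, which I expect to be the main obstacle, is justifying that the bijection of \emph{combinatorial types / COFs} upgrades to an equality of \emph{cone complexes} and not merely a set-theoretic bijection of strata. Specifically, when $\Gamma$ is complete multipartite, I must check that $\projg$ identifies the cone $C_\F$ attached to a $\Gamma$-stable flat $F$ with the cone attached to $\projg(F)$ compatibly with the face relations, i.e. that no two distinct cones of $\MzeroGrad$ collapse onto the same cone of $\BGammaprime$ and that dimensions are preserved. This is exactly controlled by the rank condition: by Lemma~\ref{lem:RankDoesNotDropWhenGammaNice} and Lemma~\ref{lem:projgInjectiveWhenGammaNice}, for every $\Gamma$-stable flat $F$ one has $\rk(F)=\rk(\projg(F))$, so the dimension $\dim C_\F=\rk(F)-1$ is preserved under $\projg$, and a chain of $\Gamma$-stable flats maps to a strictly increasing chain of flats of $\Gamma$ of the same length. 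Thus the poset of cones is preserved and the bijection is an isomorphism of cone complexes.

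Concretely, I would write:

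\begin{proof}
By the diagram defining $\Psi_\Gamma=\projg\circ(\Psi\circ\iota)$, and since $\Psi\circ\iota$ is a bijection between $\Gamma$-stable radially aligned combinatorial types and $\Gamma$-stable flats of $K_{n-1}$, the map $\Psi_\Gamma$ is a bijection of cone complexes if and only if $\projg$ is a bijection from cones corresponding to $\Gamma$-stable flats onto the cones of $\BGammaprime$. Surjectivity holds for any connected $\Gamma$ by the preceding lemma. By Lemma~\ref{lem:projgInjectiveWhenGammaNice}, $\projg$ is injective on these cones if and only if $\Gamma$ is complete multipartite, which establishes the stated equivalence at the level of combinatorial data.

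Assume now $\Gamma$ is complete multipartite. For any $\Gamma$-stable flat $F$, Lemma~\ref{lem:RankDoesNotDropWhenGammaNice} and the computation in Lemma~\ref{lem:projgInjectiveWhenGammaNice} give $\rk(F)=\rk(\projg(F))$. Consequently $\projg$ sends a chain of $\Gamma$-stable flats of $K_{n-1}$ to a strictly increasing chain of flats of $\Gamma$ of the same length, preserving the dimension $\rk(F)-1$ of each corresponding cone and the face relations among them. Hence $\Psi_\Gamma$ is an isomorphism of cone complexes, and $\MzeroGrad$ is naturally identified with $\projgrad$, which equals $\BGammaprime$ by Theorem~\ref{thm:BergmanFanEqualsProjection}. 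Since $\BGammaprime$ is a Bergman fan, it is a balanced fan with all weights equal to one; transporting this structure along the identification equips $\MzeroGrad$ with the structure of a balanced fan.
\end{proof}
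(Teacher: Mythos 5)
Your proposal is correct and follows essentially the same route as the paper: both reduce the theorem to Lemma~\ref{lem:projgInjectiveWhenGammaNice} (resting on Proposition~\ref{prop:CriterionForInjectivityOfprojg}) together with the unconditional surjectivity of $\Psi_\Gamma$, and both obtain balancedness by transporting the structure of $\projgrad=\BGammaprime$ from Theorem~\ref{thm:BergmanFanEqualsProjection}, with your extra paragraph on rank preservation merely making explicit the upgrade from a bijection of flats to an isomorphism of cone complexes that the paper asserts without comment. One small slip worth noting: the formula $\dim C_\F=\rk(F)-1$ is not correct (in the chains-of-flats subdivision a single flat corresponds to a ray, whatever its rank; the dimension of a cone is the length of its chain), but this does not damage the argument, since the statement you make immediately afterward --- that rank preservation sends a chain of $\Gamma$-stable flats to a strictly increasing chain of flats of $\Gamma$ of the same length --- is exactly what carries the cone-complex identification.
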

\begin{proof}
By Lemma~\ref{lem:projgInjectiveWhenGammaNice} and Proposition~\ref{prop:CriterionForInjectivityOfprojg}, $\projg$ induces a bijection between the set of $\Gamma$-stable flats of $K_{n-1}$ and flats $\Gamma$ only when $\Gamma$ is a complete multipartite graph. In this case $\Psi_\Gamma$ is a bijection between $\Gamma$-stable radially aligned combinatorial types of $\MzeroGrad$ and flats of $\Gamma$. Thus the map $\Psi_\Gamma$ induces an isomorphism of cone complexes on the ambient vector spaces. We finish the proof with noting that $\MzeroGrad$ is a balanced fan by the fact that it has the same structure as the balanced fan $\projgrad$.
\end{proof}

{
\bibliographystyle{plain}
\bibliography{References.bib}
}
\end{document}